\newtheorem{theorem}{Theorem}[section]
\newtheorem{proposition}{Proposition}[section]
\newenvironment{proof}[1][Proof]{\noindent \textbf{#1.} }{\ \ \  $\Box$}
\newtheorem{lemma}{Lemma}[section]
\newtheorem{definition}{Definition}[section]
\newtheorem{remark}{Remark}[section]
\title{Zero-sum linear quadratic stochastic integral games and BSVIEs \thanks{This work is supported by National Natural
Science Foundation of China Grant 10771122, Natural Science
Foundation of Shandong Province of China Grant Y2006A08 and National
Basic Research Program of China (973 Program, No. 2007CB814900).} }
\date{May 28 2010}
\author{Tianxiao Wang and Yufeng Shi \thanks{E-mail: xiaotian2008001@gmail.com yfshi@sdu.edu.cn}\\ \small{School of
Mathematics, Shandong University, Jinan 250100, China}}
\begin{document}

\maketitle

\begin{abstract}
This paper formulates and studies a linear quadratic (LQ for short)
game problem governed by linear stochastic Volterra integral
equation. Sufficient and necessary condition of the existence of
saddle points for this problem are derived. As a consequence we
solve the problems left by Chen and Yong in \cite{CY}. Firstly, in
our framework, the term $GX^2(T)$ is allowed to be appear in the
cost functional and the coefficients are allowed to be random.
Secondly we study the unique solvability for certain coupled
forward-backward stochastic Volterra integral equations (FBSVIEs for
short) involved in this game problem. To characterize the condition
aforementioned explicitly, some other useful tools, such as backward
stochastic Fredholm-Volterra integral equations (BSFVIEs for short)
and stochastic Fredholm integral equations (FSVIEs for short) are
introduced. Some relations between them are investigated. As a
application, a linear quadratic stochastic differential game with
finite delay in the state variable and control variables is studied.

\par  $\textit{Keywords:}$ Stochastic integral games, open-loop
controls, saddle points, linear quadratic optimal control problem,
coupled forward-backward stochastic Volterra integral equations,
backward stochastic Fredholm-Volterra integral equation
\end{abstract}



\section{Introduction}
Differential game is a classical problem, there are several
frameworks of investigating it as far as the strategies are
concerned; see \cite{E}, \cite{H1}, \cite{H2}, \cite{I}, \cite{L}
for open-loop strategies of both deterministic and stochastic
differential games, and \cite{B}, \cite{F}, \cite{H2}, \cite{I},
\cite{L}, \cite{U}, \cite{V} for closed-loop strategy counterpart.
In addition, we would also like to mention the work of Fleming and
Souganidis \cite{FS}, who firstly gave a study on two-player
zero-sum stochastic differential games. Nonetheless there are few
literature to demonstrate some analytical research on the more
general dynamic setting, such as the system driven by a Volterra
equation. In this connection the only paper we know is You
\cite{Y3}, where the problem is spread out in the deterministic
setting. Such a lack of study is certainly not due to the
unimportance and non-interestingness of the problem, rather it is
because, we believe, that most of the effective techniques in the
conventional differential game are not analyzable and applicable
well in such setting. For example, as compared with differential
dynamic system, the time-consistency (or semi-group) property is
failure in the Volterra integral case, thus many good results along
this no longer hold.

In this paper, we will initiate a study on zero-sum linear quadratic
(LQ for short) stochastic integral game. A particularly important
notion for investigating the problem is given by Nash equilibria.
Loosely speaking, in the game problem, Player 1 wishes to minimize
the quadratic performance (4) which represents the cost and Player 2
wishes to maximize (4) which represents the payoff. Since both of
the two players are non-cooperative, they would like to seek their
admissible controls $\widehat{u}_1$ and $\widehat{u}_2$,
respectively, such that
\begin{equation}
J(\widehat{u}_1,u_2)\leq J(\widehat{u}_1,\widehat{u}_2)\leq J(u_1,\widehat{u}%
_2),
\end{equation}
for all the admissible controls $u_1$ and $u_2.$ The reason for (1)
holds is
that none of the players can improve his/her outcome $J(\widehat{u}_1,%
\widehat{u}_2)$ by deviating from $\widehat{u}_1$ or $\widehat{u}_2$
unilaterally. Thus both players will be satisfied with the controls $%
\widehat{u}_1$ and $\widehat{u}_2,$ respectively. In our framework, we refer to $(\widehat{u}%
_1,\widehat{u}_2)$ as an open-loop saddle point of the game over
$[0,T],$ and we consider only the open loop strategies in the
following part. Additionally, we point out that in general, an
open-loop saddle point (if it exists) is not necessarily unique.

Before demonstrating the model we will study in this paper, we would
like to summarize how our work relates to the literature on which it
builds. One of the main results is establishing the relation between
the LQ stochastic integral game aforementioned and backward
stochastic Volterra integral equations (BSVIEs for short), which is
based on the results in \cite{CY}, nonetheless in a much more
general setting. First the two-player nature of the game problem
demands more delicate manipulations of all the involved Hilbert
operators. Secondly we solve a problem left by Chen and Yong in
\cite{CY} since here the term $GX^{2}(T)$ is allowed to appear in
the cost functional (4). As a consequence, our paper naturally
relates to the BSVIEs theoretic literature. The unique solvability
of BSVIEs was firstly studied by Lin \cite{L}, see also \cite{WZ}
for later related research. As to the following general form,
\begin{eqnarray}
Y(t)=\psi
(t)+\int_t^Tg(t,s,Y(s),Z(t,s),Z(s,t))ds-\int_t^TZ(t,s)dW(s),
\end{eqnarray}
we should mention the contribution of Yong (\cite{Y1}, \cite{Y2}),
who firstly introduced the notion of adapted M-solution in \cite{Y2}
and established a Pontryagin type maximum principle for optimal
control of stochastic Volterra integral equations with the help of
M-solution. Moreover, as shown by Yong in \cite{Y2'}, a class of
continuous-time dynamic convex and coherent risk measures can be
derived via certain BSVIEs. Along this we also refer the reader to
\cite{R}, \cite{WS} for some other studies on this topic.

In a deterministic setting, the game problem for an input-output
system governed by Volterra integral equation of the form
\[
X(t)=\varphi (t)+\int_0^t[B_1(t,s)u_1(s)+C_1(t,s)u_2(s)]ds
\]
with respect to a quadratic performance functional of the form
\[
J(u_1(\cdot ),u_2(\cdot
))=E\int_0^T[QX^2(t)+R_1u_1^2(t)+R_2u_2^2(t)]dt+EGX^2(T),
\]
was studied in \cite{Y3}. In this context, $X,u_1$ and $u_{2%
} $ are deterministic functions. With this interpretation, our
problem here can be seen as a natural extension of the work in
\cite{Y3}. To our knowledge, the paper might be the first one for
the stochastic quadratic integral game. As to the causal feedback
optimal control problem for deterministic Volterra integral
equation, we would like to mention the works in \cite{Y3} and
\cite{PY}. A new method called $projection$ $causality$ to this
problem was introduced. So when the system is stochastic Volterra
integral equation, how to provide a causal feedback implementation
of the optimal strategies is still a problem and we hope to study it
in the future.

In the traditional stochastic differential game, coupled
forward-backward stochastic differential equations (FBSDEs for
short) play an important role in the existence of the open-loop
saddle points, see for example \cite{MY1}, \cite{Y0} and the
reference cited therein. As to the solvability of coupled FBSDEs,
there have been burgeoning research interest in it, see \cite{A},
\cite{HP}, \cite{MPY}, whereas almost all the methods depend heavily
on It\^{o} formula or the time-consistent (or semi-group) property
of differential equation. In our framework, we will obtain the
existence of an open loop saddle point of the quadratic integral
game, which is equivalent to the solvability of certain FBSVIE plus
the convexity and concavity of the cost functional below. Thus the
solvability of coupled forward-backward stochastic Volterra integral
equations (FBSVIEs for short) should also play an important role for
the stochastic integral game we are tackling. However, the
solvability for FBSVIEs is more challenging as compared with the
situations for FBSDEs aforementioned. On the one hand, many
conventional and convenient approaches or conditions, such as the
four-steps method in \cite{MPY}, the monotonicity condition in
\cite{HP}, especially the most important It\^{o} formula, all are
absent in this case. On the other hand, become of the lack of
time-consistent property for BSVIEs (or FBSVIEs), we can not use the
induction directly as differential equation and more complicated
things should be involved, see the existence and uniqueness of
M-solution of BSVIEs in \cite{Y2} for detailed accounts. Worse
still, the coupling of there two factors greatly amplifies the
difficulty of the problem.

In this paper, given assumptions, we will establish the existence
and uniqueness of M-solution for coupled FBSVIEs (31),
which will be involved in our game problem. As mentioned earlier
most of effective techniques in tackling the problem for
differential equation become failure, therefore, we have to carry
out investigation from some other basic and original views. By
assuming that $\beta$ is a constant, we can introduce a new
equivalent norm for M-solutions of FBSVIEs as follows
\begin{eqnarray*}
&&\left\| (x(\cdot ),y(\cdot ),z(\cdot ,\cdot ))\right\| _{\mathcal{H}%
^2[0,T]\times L^2_{\mathcal{F}}[0,T]}^2 \\
&=&E\left[ \int_0^Te^{-\beta s}|x(s)|^2ds+\int_0^Te^{\beta
s}|y(s)|^2ds+\int_0^Te^{\beta t}\int_0^T|z(t,s)|^2dsdt\right],
\end{eqnarray*}
thereby study the unique existence by means of fixed point theorem.
This is a common trick employed in the conventional BSDEs case,
which also enables us to get around the inapplicability of It\^{o}
formula in the current setting. It is also worthy to claim that we
do not need more assumptions, such as the monotonicity condition in
\cite{HP}, except the Lipschitz condition. Thus this can be seen as
another contribution of this paper.

Notice that as to general form of coupled FBSVIEs, see (32) below,
there is hitherto no well technology to deal with, and it is an
object of endeavor for us in the future. One substantial difficulty,
we believe, was caused by the appearance of $X(T)$ in the two
backward equations of (32). However, in certain special case, we can
transform the unique solvability of FBSVIE (32) into the solvability
of some kind of backward stochastic Fredholm-Volterra integral
equation (BSFVIE for short), allowing the appearance of $X(T).$ More
importantly, under some assumptions, the aforementioned BSFVIE
becomes a forward stochastic Fredholm-Volterra integral equation
(SFVIE for short), and this helps us to characterize the Nash
equilibrium strategy more explicitly. We refer the readers to
\cite{CW} and \cite{V} for details on the solvability of SFVIEs. As
to the case for BSFVIE, the problem is much more complicated and we
hope to study it in the future. At last we will illustrate the
application of the obtained results to the stochastic quadratic
differential game with delay.

The reminder of this paper is organized as follows. In the next
section, the game problem will be formulated and some preliminary
results will also be stated. In Section 3 we study the LQ integral
games in Hilbert space and obtain one necessary and sufficient
condition of existence of saddle point. In Section 4 we will make
use of BSVIEs (or FBSVIEs) to characterize the result derived in
Section 3 more explicitly. In Section 5 we give another sufficient
condition with the help of the solvability of M-solution for coupled
FBSVIE (26). Some further considerations, such as the relationship
between coupled FBSVIEs, BSFVIEs and SFVIEs are investigated. At
last we give some results of stochastic differential game with delay
and obtain one explicit expression of the saddle point, which is
consistent with the result in \cite{Y3}.

\section{Problem formulation and preliminary}
Let $(W_t)_{t\in [0,T]}$ be a scalar-valued Wiener process defined
on a probability space $(\Omega ,\mathcal{F},P)$ and
$(\mathcal{F}_t)_{t\in [0,T]} $ denotes the natural filtration of
$(W_t),$ such that $\mathcal{F}_0$ contains all $P$-null sets of
$\mathcal{F}.$ Our assumption that $W(\cdot )$ is scalar-valued is
for the sake of simplicity and no essential difficulties are
encountered when extending our analysis to the case of vector-valued
Brownian motion.

Suppose the dynamic of a stochastic system is described by a
controlled linear stochastic Volterra integral equation (SVIE for
short),
\begin{eqnarray}
X(t) &=&\varphi (t)+\int_0^t[A_1(t,s)X(s)+B_1(t,s)u_1(s)+C_1(t,s)u_2(s)]ds  \nonumber \\
&&+\int_0^t[A_2(t,s)X(s)+B_2(t,s)u_1(s)+C_2(t,s)u_2(s)]dW(s),
\end{eqnarray}
where $u_1$ and $u_2$ are adapted and stand for, respectively the
intervention functions of two agents Play 1 and Play 2 on the
dynamic system. $X$ is the state process and $u_1$ and $u_2$ are
control processes taken by two players. To avoid undue technicality,
we assume both the state process and control process are
scalar-valued. We define the cost functional associated with (3) for
the players as follows:
\[
J(u_1(\cdot ),u_2(\cdot
))=E\int_0^Tf(t,X(t),u_1(t),u_2(t))dt+EGX^2(T),
\]
where
\begin{eqnarray*}
&&f(t,X(t),u_1(t),u_2(t)) \\
&=&Q(t)X^2(t)+2S_1(t)X(t)u_1(t)+2S_2(t)X(t)u_2(t)+R_{11}(t)u_1^2(t)\\
&&+R_{12}(t)u_1(t)u_2(t)+R_{21}(t)u_1(t)u_2(t)+R_{22}(t)u_2^2(t)
\\
&=&\left\langle \left(
\begin{array}{ccc}
Q(t) & S_1(t) & S_2(t) \\
S_1(t) & R_{11}(t) & R_{12}(t) \\
S_2(t) & R_{21}(t) & R_{22}(t)
\end{array}
\right) \left(
\begin{array}{c}
X(t)\\
u_1(t) \\
u_2(t)
\end{array}
\right) ,\left(
\begin{array}{c}
X(t)\\
u_1(t) \\
u_2(t)
\end{array}
\right) \right\rangle _2.
\end{eqnarray*}
Note that $\left\langle\cdot,\cdot \right\rangle_2$ is defined blew.
In what follows, we will denote
\[
S(t)=\left(
\begin{array}{c}
S_1(t) \\
S_2(t)
\end{array}
\right) , \quad R(t)=\left[
\begin{array}{cc}
R_{11}(t) & R_{12}(t) \\
R_{21}(t) & R_{22}(t)
\end{array}
\right],\quad u(t)=\left(
\begin{array}{c}
u_1(t) \\
u_2(t)
\end{array}
\right) ,
\]
and
\begin{eqnarray}
&&J(u_1(\cdot ),u_2(\cdot ))\nonumber \\
&=&E\int_0^T[Q(t)X^2(t)+2X(t)S(t)\cdot u(t)+R(t)u(t)\cdot u(t)]dt+EGX^2(T)\nonumber \\
&=&\left\langle QX,X\right\rangle _2+2\left\langle XS,u\right\rangle
_2+\left\langle Ru,u\right\rangle _2+E\left\langle
GX(T),X(T)\right\rangle_1.
\end{eqnarray}
Throughout this paper, we assume that $Q$, $R_{ij}$ and $S_{i}$
($i,j=1,2$) are bounded adapted processes and $G$ is a bounded
random variable. Note that in the above model, the controls are
allowed to appear in both the drift and diffusion of the state
equation, the weighting matrices in the payoff/cost functional are
not assumed to be definite/non-singular, and the cross-terms between
two controls are allowed to appear, we refer this problem as a
so-called zero-sum linear quadratic stochastic integral game.

Next we will give some notations. We denote $\Delta^c=\{t,s)\in[0,T]^2; t\leq s\}$ and
$\Delta=[0,T]^2\ \Delta^c.$ Let $L^2(\Omega \times [0,T])$ be the set of the processes $%
X:[0,T]\times \Omega \rightarrow R$ which is
$\mathcal{B}([0,T])\times
\mathcal{F}_T$-measurable satisfying $E\int_0^T|X(t)|^2dt<\infty .$ $%
L^2(\Omega )$ is set of random variable $\xi :\Omega \rightarrow R$
which is $\mathcal{F}_T$-measurable satisfying $E|\xi |^2<\infty $,
and we denote its inner product by $\left\langle \cdot ,\cdot
\right\rangle _1.$ $\forall
R,S\in [0,T],$ $L_{\mathcal{F}}^2[R,S]$ is the set of all adapted processes $%
X:[R,S]\times \Omega \rightarrow R$ such that
$E\int_R^S|X(t)|^2dt<\infty $, and we denote its inner product by
$\left\langle \cdot ,\cdot \right\rangle
_2.$ $L^2(R,S;L_{\mathcal{F}}^2[R,S])$ be the set of all process $%
Z:[R,S]^2\times \Omega \rightarrow R$ such that for almost all $t\in
[R,S],$
$Z(t,\cdot )$ is $\mathcal{F}$-adapted satisfying $E\int_R^S%
\int_R^S|Z(t,s)|^2dsdt<\infty .$ We denote $\mathcal{H}^2[R,S]=L_{\mathcal{F}%
}^2[R,S]\times L^2(R,S;L_{\mathcal{F}}^2[R,S]).$ $L^{\infty} [0,T]$
is set of deterministic function $X:[0,T]\times\Omega\rightarrow R$
such that $\sup\limits_{t\in [0,T]}|X(t)|<\infty .$
$L^2(0,T;L^\infty [0,T])$ is set of deterministic function
$X:[0,T]^2\rightarrow R$ such that for almost $t\in [0,T],$
$\sup\limits _{s\in [0,T]}|X(t,s)|<\infty .$ $L^2(0,T;L^2[0,T])$ is
set of deterministic
function $X:[0,T]^2\rightarrow R$ such that for almost $t\in [0,T],$ $%
\int_0^T\int_0^T|X(t,s)|^2ds<\infty .$ As to
$L^{\infty}_{\Bbb{F}}[0,T]$, $L^\infty(0,T;L^2_{\Bbb{F}}[0,T])$ and
$L^\infty(0,T;L^\infty_{\Bbb{F}}[0,T])$, we can define them in a
similar manner.

The notion of M-solutions of BSVIEs can be expressed as,
\begin{definition}
Let $S\in [0,T]$. A pair of $(Y(\cdot ),Z(\cdot ,\cdot ))\in \mathcal{H}%
^2[S,T]$ is called an adapted M-solution of BSVIE (2) on $[S,T]$ if
(2) holds in the usual It\^o's sense for almost all $t\in [S,T]$ and
in addition, the following holds:
\[
Y(t)=E^{\mathcal{F}_S}Y(t)+\int_S^tZ(t,s)dW(s),\quad t\in [S,T].
\]
\end{definition}
In \cite{Y2}, the author gave the definition of M-solution of BSVIE in $%
\mathcal{H}^2[0,T]$ and proved the following proposition,
\begin{proposition}
Let $g:\Delta ^c\times R\times R\times R\times \Omega \rightarrow R$
be $\mathcal{B}(\Delta ^c\times R\times R
\times R)\otimes \mathcal{F}_T$-measurable such that $%
s\rightarrow g(t,s,y,z,\zeta )$ is $\Bbb{F}$-progressively
measurable for all $(t,y,z,\zeta )\in [0,T]\times R\times R\times
R$, moreover, $g$ satisfies the Lipschitz conditions, $\forall y,$
$\overline{y}\in R,$ $z,$ $\overline{z},$ $\zeta ,$ $\overline{\zeta
}\in R,$
\begin{eqnarray*}
\ \ |g(t,s,y,z,\zeta
)-g(t,s,\overline{y},\overline{z},\overline{\zeta })| \leq
L(t,s)(|y-\overline{y}|+|z-\overline{z}|+|\zeta -\overline{\zeta
}|),
\end{eqnarray*}
where $(t,s)\in \Delta ^c,$ $\Delta ^c=\left\{ (t,s)\in [0,T]^2\mid
t\leq s\right\} ,$ $L(t,s)$ is a determined non-negative function
satisfying $\sup\limits _{t\in
[0,T]}\displaystyle\int_t^TL^{2+\epsilon }(t,s)ds<\infty ,$ for some
$\epsilon >0.$ Then (2) admits a unique M-solution in
$\mathcal{H}^2[0,T].$
\end{proposition}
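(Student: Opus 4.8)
The plan is to prove Proposition 1.1 (the unique solvability of the M-solution for the BSVIE~(2) in $\mathcal{H}^2[0,T]$) by a fixed-point argument in a suitably weighted norm, following the strategy that Yong introduced in \cite{Y2}. The key difficulty, and the reason one cannot simply invoke the standard BSDE contraction, is that the unknown $Z(t,\cdot)$ is a \emph{two-parameter} process and the equation couples $Z(t,s)$ with the ``off-diagonal'' term $Z(s,t)$; moreover the M-solution requires the martingale representation constraint $Y(t)=E^{\mathcal{F}_S}Y(t)+\int_S^t Z(t,s)\,dW(s)$ to pin down $Z(t,s)$ for $s<t$ as well. So the iteration map has to be set up on the full space $\mathcal{H}^2[0,T]$ with the constraint built in.

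\smallskip

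First I would reduce, as in \cite{Y2}, to the case $\psi(t)\equiv 0$ is not needed; instead the natural move is to define, for a given $(y(\cdot),z(\cdot,\cdot))\in\mathcal{H}^2[0,T]$, the process
\[
\eta(t) = \psi(t) + \int_t^T g\bigl(t,s,y(s),z(t,s),z(s,t)\bigr)\,ds,\qquad t\in[0,T],
\]
which is $\mathcal{F}_T$-measurable and, by the growth bound on $g$ coming from the Lipschitz condition together with $\sup_t\int_t^T L^{2+\epsilon}(t,s)\,ds<\infty$, lies in $L^2(\Omega\times[0,T])$. Then for each fixed $t$ one applies the martingale representation theorem to $E^{\mathcal{F}_t}\eta(t)$ on $[0,t]$ and to $\eta(t)$ on $[t,T]$ to produce a new pair $(Y(t),Z(t,\cdot))$: set $Y(t)=E^{\mathcal{F}_t}\eta(t)+\int_0^t Z(t,s)\,dW(s)$ where $Z(t,\cdot)$ on $[0,t]$ comes from representing $Y(t)$ against its conditional expectation, and on $[t,T]$ from representing $\eta(t)=E^{\mathcal{F}_t}\eta(t)+\int_t^T Z(t,s)\,dW(s)$. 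By construction this map $\Theta:(y,z)\mapsto(Y,Z)$ lands in $\mathcal{H}^2[0,T]$ and any fixed point is exactly an adapted M-solution of~(2).

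\smallskip

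The core step is to show $\Theta$ is a contraction under the weighted norm
\[
\|(y,z)\|_{\beta}^2 = E\int_0^T e^{\beta t}\Bigl(|y(t)|^2 + \int_0^T |z(t,s)|^2\,ds\Bigr)\,dt
\]
for $\beta$ large enough (the paper's introduction already advertises this device). Taking two inputs $(y,z)$, $(y',z')$ with differences $\delta y,\delta z$, I would estimate $E|\delta Y(t)|^2 + E\int_0^t|\delta Z(t,s)|^2ds$ by $E|\delta\eta(t)|^2$ via the Itô isometry and the conditional-expectation contraction (the representation-theorem step is an isometry, so it is harmless), and separately control $E\int_t^T |\delta Z(t,s)|^2 ds$ the same way. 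Then $E|\delta\eta(t)|^2 \le \bigl(\int_t^T L(t,s)\,ds\bigr)\int_t^T L(t,s)\bigl(|\delta y(s)|^2+|\delta z(t,s)|^2+|\delta z(s,t)|^2\bigr)\,ds$ by Cauchy--Schwarz; multiplying by $e^{\beta t}$, integrating in $t$, and using Fubini to swap the order of integration in the $\delta y$ and $\delta z(s,t)$ terms produces factors like $\int_0^s e^{\beta t}L(t,s)\,dt$ which, after a Hölder split exploiting the $2+\epsilon$ integrability, can be made arbitrarily small uniformly as $\beta\to\infty$. This yields $\|\Theta(y,z)-\Theta(y',z')\|_\beta \le \rho\,\|(y,z)-(y',z')\|_\beta$ with $\rho<1$.

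\smallskip

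\textbf{The main obstacle} is the bookkeeping for the off-diagonal term $z(s,t)$: unlike in the BSDE case, after integrating the bound in $t$ one gets a term $E\int_0^T e^{\beta t}\int_t^T |\delta z(s,t)|^2\,ds\,dt$ that, upon interchanging $s$ and $t$, becomes $E\int_0^T\!\int_0^s e^{\beta t}|\delta z(t,s)|^2\,dt\,ds$, and one must verify this is genuinely dominated by $\|\delta z\|_\beta^2$ with a small constant — this is where the exponential weight $e^{\beta t}$ (rather than $e^{\beta s}$) in front of the $z$-integral and the choice of which variable carries the weight matter, and it is the place where the argument differs essentially from the one-parameter theory. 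Once the contraction is established on the Banach space $(\mathcal{H}^2[0,T],\|\cdot\|_\beta)$ — equivalent to the standard norm — the Banach fixed point theorem gives existence and uniqueness of the M-solution, completing the proof.
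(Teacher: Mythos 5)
There is a genuine gap, and it sits in the one term you did not flag. Note first that the paper does not prove this proposition at all: it is quoted from Yong \cite{Y2}, so the only benchmark is Yong's argument, which is structurally different from yours. Your map is the fully explicit Picard map: you substitute the input $(y,z)$ into \emph{all three} slots of $g$, including the slot $Z(t,s)$, and then recover $(Y,Z)$ by conditional expectation and martingale representation. For that map, write the three contributions to $E\,e^{\beta t}|\delta\eta(t)|^2$ coming from $\delta y(s)$, $\delta z(t,s)$ and $\delta z(s,t)$. The $\delta y$ and $\delta z(s,t)$ terms can indeed be made small for large $\beta$ (by the split $L(t,s)=L(t,s)e^{-\beta s/2}\cdot e^{\beta s/2}$, which uses the hypothesis $\sup_t\int_t^T L^{2+\epsilon}(t,s)\,ds<\infty$ in the correct variable; your own manipulation, producing $\int_0^s e^{\beta t}L(t,s)\,dt$, is not licensed by that hypothesis, which controls $L$ only in its \emph{second} argument). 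But the $\delta z(t,s)$ term is fatal: the It\^o-isometry estimate for the output forces the weight in the $z$-norm to sit on the \emph{first} variable $t$ (as you chose), while in $\bigl(\int_t^T L(t,s)|\delta z(t,s)|\,ds\bigr)^2$ the integration runs over the \emph{second} variable $s$. Whatever H\"older split you use, you end up either with the non-small constant $\sup_t\int_t^T L^2(t,s)\,ds$ in front of $\|\delta z\|_\beta^2$, or with a bound weighted by $e^{\beta s}$, $s\ge t$, which is not dominated by the $e^{\beta t}$-weighted norm. So the map is Lipschitz but not a contraction uniformly in $\beta$; the mechanism that saves the analogous term in the scalar BSDE proof (the weight sits on the same variable that is integrated in the generator) is precisely what is unavailable here.

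The missing idea is a two-level structure (or, equivalently, a small-interval argument with patching, which is what Yong actually does). For each fixed $t$ one should solve the equation in $(Y(t),Z(t,\cdot))$ on $[t,T]$ as a parameterized BSDE-type (stochastic Fredholm) equation, with only $y(\cdot)$ and $z(\cdot,t)$ taken from the input; the dependence on $Z(t,s)$ is then absorbed by the standard inner BSDE contraction, and the outer fixed-point iteration involves only the $\delta y$ and $\delta z(s,t)$ terms, which are the tractable ones. In addition, the M-solution constraint requires constructing $Z(t,s)$ for $s<t$ and verifying consistency, which in \cite{Y2} is done through an induction over subintervals $[T-\delta,T]$, $[T-2\delta,T-\delta],\dots$ with extra equations determining $Z$ on the mixed rectangles; your construction builds the M-condition into the output but never restricts the input space to $\mathcal{M}^2[0,T]$ nor addresses this patching. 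As written, the proposal would not close; repaired along the lines above it becomes essentially Yong's proof rather than a shortcut to it.
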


\section{Stochastic LQ integral games in Hilbert spaces}
In this section the linear quadratic stochastic integral games
problem is formulated in Hilbert space. It is important to recognize
that the classical LQ stochastic differential games and LQ optimal
control problem for FSVIEs can also be treated similarly in infinite
dimensional space, see \cite{MY1} and \cite{CY}. We incorporate some
useful techniques in Chen and Yong \cite{CY}, but investigate the
problem in a more general framework. On the one hand, the
coefficients, in both state equation and cost functional, are
allowed to be random, moreover, the form of cost functional is
general, especially allowing the appearance of the term $GX^2(T)$.
On the other hand, the nature of game problem also demand more
delicate analysis of the operators involved. To start with, we need
to make some preliminary.

Let $\mathcal{H}$ be a Hilbert space and $\Theta :\mathcal{D}(\Theta
)\subseteq \mathcal{H}\rightarrow \mathcal{H}$ be a self-adjoint
operator, i.e., it is densely defined and closed but not necessarily
bounded. We denote $\mathcal{R}(\Theta )$ and $\mathcal{N}(\Theta )$
to be the range and
kernel of $\Theta ,$ respectively. Since $\Theta $ is self-adjoint, $%
\mathcal{N}(\Theta )^{\perp }=\overline{\mathcal{R}(\Theta )}$ and we have $%
\Theta \left( \mathcal{D}(\Theta
)\bigcap\overline{\mathcal{R}(\Theta )}\right)
\subseteq \mathcal{R}(\Theta ).$ Thus under the decomposition $\mathcal{H}=%
\mathcal{N}(\Theta )\oplus \overline{\mathcal{R}(\Theta )},$ we have
the following representation for $\Theta :$ $\Theta =\left(
\begin{array}{cc}
0 & 0 \\
0 & \widehat{\Theta }
\end{array}
\right) ,$ where $\widehat{\Theta }:\mathcal{D}(\Theta )\bigcap\overline{\mathcal{%
R}(\Theta )}\subseteq \overline{\mathcal{R}(\Theta )}\rightarrow \overline{%
\mathcal{R}(\Theta )}$ is self-adjoint (again, it is densely defined
and
closed, but not necessarily bounded, on the Hilbert space $\overline{%
\mathcal{R}(\Theta )}).$ Now we define the pseudo-inverse $\Theta
^{\dagger } $ by the following: $\Theta ^{\dagger }=\left(
\begin{array}{cc}
0 & 0 \\
0 & \widehat{\Theta }^{-1}
\end{array}
\right) ,$ with domain
\[
\mathcal{D}(\Theta ^{\dagger })=\mathcal{N}(\Theta
)+\mathcal{R}(\Theta )\equiv \{u_0+u_1\mid u_0\in \mathcal{N}(\Theta
),u_1\in \mathcal{R}(\Theta )\}\supseteq \mathcal{R}(\Theta ).
\]
Let $\mathcal{H}=\mathcal{H}_1\times \mathcal{H}_2$ with
$\mathcal{H}_1$ and $\mathcal{H}_2$ being two Hilbert spaces, and we
consider a quadratic functional on $\mathcal{H}:$ for any
$u=(u_1,u_2),$ $v\in \mathcal{H},$
\begin{eqnarray*}
J(u) &\equiv &J(u_1,u_2)=\left\langle \Theta u,u\right\rangle
+2\left\langle
v,u\right\rangle \\
&\equiv &\left\langle \left(
\begin{array}{cc}
\Theta _{11} & \Theta _{12} \\
\Theta _{21} & \Theta _{22}
\end{array}
\right) \left(
\begin{array}{c}
u_1 \\
u_2
\end{array}
\right) ,\left(
\begin{array}{c}
u_1 \\
u_2
\end{array}
\right) \right\rangle \\
&&+2\left\langle \left(
\begin{array}{c}
v_1 \\
v_2
\end{array}
\right) ,\left(
\begin{array}{c}
u_1 \\
u_2
\end{array}
\right) \right\rangle .
\end{eqnarray*}
Here $\Theta _{ij}:\mathcal{H}_j\rightarrow \mathcal{H}_i$ ($i,j=1,2$) is bounded operator, $%
\Theta \equiv \left(
\begin{array}{cc}
\Theta _{11} & \Theta _{12} \\
\Theta _{21} & \Theta _{22}
\end{array}
\right) $ is self-adjoint. We have the proposition as follows:
\begin{proposition}
There exists a saddle point $(\widehat{u}_1,\widehat{u}_2)\in \mathcal{H}%
_1\times \mathcal{H}_2$ for $(u_1,u_2)\mapsto J(u_1,u_2),$ that is,
\[
J(\widehat{u}_1,u_2)\leq J(\widehat{u}_1,\widehat{u}_2)\leq J(u_1,\widehat{u}%
_2),\forall (u_1,u_2)\in \mathcal{H}_1\times \mathcal{H}_2,
\]
if and only if $v\in \mathcal{R}(\Theta )$ and the following are true: $%
\Theta _{11}\geq 0$ (it means that $\forall u_1\in \mathcal{H}%
_1,\left\langle \Theta _{11}u_1,u_1\right\rangle _{\mathcal{H}_1}\geq 0)$, $%
\Theta _{22}\leq 0$ (it means that $\forall u_2\in \mathcal{H}%
_2,\left\langle \Theta _{22}u_2,u_2\right\rangle
_{\mathcal{H}_2}\leq 0).$
In the above case, each saddle point $\widehat{u}=(\widehat{u}_1,\widehat{u}%
_2)\in \mathcal{H}_1\times \mathcal{H}_2$ is a solution of the equation: $%
\Theta \widehat{u}+v=0,$ and it admits a representation: $\widehat{u}%
=-\Theta ^{\dagger }v+(I-\Theta ^{\dagger }\Theta )\widetilde{v},$ for some $%
\widetilde{v}\in \mathcal{H}.$ Moreover, $\widehat{u}$ is unique if
and only if $\mathcal{N}(\Theta )=\{0\}.$
\end{proposition}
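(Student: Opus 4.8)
The plan is to reduce the two-player statement to two coupled one-player (convex/concave) minimization problems and then apply the classical Hilbert-space lemma on quadratic functionals: a quadratic functional $u\mapsto\langle\Lambda u,u\rangle+2\langle w,u\rangle$ on a Hilbert space attains its minimum iff $\Lambda\ge 0$ and $w\in\mathcal R(\Lambda)$, with minimizers being exactly the solutions of $\Lambda u+w=0$. First I would fix $u_1$ and look at $u_2\mapsto J(u_1,u_2)=\langle\Theta_{22}u_2,u_2\rangle+2\langle\Theta_{21}u_1+v_2,u_2\rangle+(\text{terms free of }u_2)$; the right inequality in the saddle-point definition, with $u_1=\widehat u_1$, says precisely that $\widehat u_2$ maximizes this, i.e. $-\Theta_{22}\ge 0$ (so $\Theta_{22}\le 0$) and $\Theta_{21}\widehat u_1+v_2\in\mathcal R(\Theta_{22})$, with first-order condition $\Theta_{21}\widehat u_1+\Theta_{22}\widehat u_2+v_2=0$. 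Symmetrically, fixing $u_2=\widehat u_2$, the left inequality says $\widehat u_1$ minimizes $u_1\mapsto\langle\Theta_{11}u_1,u_1\rangle+2\langle\Theta_{12}\widehat u_2+v_1,u_1\rangle$, giving $\Theta_{11}\ge 0$ and the first-order condition $\Theta_{11}\widehat u_1+\Theta_{12}\widehat u_2+v_1=0$. Stacking the two first-order conditions is exactly $\Theta\widehat u+v=0$.

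For the converse direction I would argue that $\Theta_{11}\ge 0$ and $\Theta_{22}\le 0$ make $u_1\mapsto J(u_1,\widehat u_2)$ convex and $u_2\mapsto J(\widehat u_1,u_2)$ concave, so any solution $\widehat u$ of $\Theta\widehat u+v=0$ automatically satisfies both saddle inequalities: for the right one, expand $J(\widehat u_1,u_2)-J(\widehat u_1,\widehat u_2)=\langle\Theta_{22}(u_2-\widehat u_2),u_2-\widehat u_2\rangle+2\langle\Theta_{21}\widehat u_1+\Theta_{22}\widehat u_2+v_2,u_2-\widehat u_2\rangle$, and the linear term vanishes by the first-order condition while the quadratic term is $\le 0$; the left inequality is handled the same way with the sign reversed. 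So existence of a saddle point is equivalent to solvability of $\Theta\widehat u+v=0$ together with the sign conditions on the diagonal blocks; and $\Theta\widehat u+v=0$ has a solution iff $v\in\mathcal R(\Theta)$, in which case the general solution is $\widehat u=-\Theta^\dagger v+(I-\Theta^\dagger\Theta)\widetilde v$ for arbitrary $\widetilde v\in\mathcal H$ — this is the standard description of the solution set of a linear equation with self-adjoint operator via the pseudo-inverse, using $\Theta\Theta^\dagger v=v$ for $v\in\mathcal R(\Theta)$ and $\mathcal R(I-\Theta^\dagger\Theta)=\mathcal N(\Theta)$. Uniqueness then holds iff $\mathcal N(\Theta)=\{0\}$, since $I-\Theta^\dagger\Theta$ is the projection onto $\mathcal N(\Theta)$.

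The point that needs care — and which I expect to be the main obstacle — is the passage between "the saddle inequalities hold" and "the two scalar first-order conditions hold," specifically extracting $v\in\mathcal R(\Theta)$ (the full range condition) from the two separate range conditions $\Theta_{21}\widehat u_1+v_2\in\mathcal R(\Theta_{22})$ and $\Theta_{12}\widehat u_2+v_1\in\mathcal R(\Theta_{11})$. These per-block conditions are what the one-dimensional optimality lemma hands you, but they are a priori weaker than $v\in\mathcal R(\Theta)$; the bridge is precisely that a saddle point $\widehat u$ actually solves $\Theta\widehat u+v=0$, so $v=-\Theta\widehat u\in\mathcal R(\Theta)$ directly. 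Thus the logical order matters: one should first use the saddle inequalities to produce $\widehat u$ and show it satisfies $\Theta\widehat u+v=0$ (hence $v\in\mathcal R(\Theta)$ and the sign conditions), and only then, for the converse, feed $v\in\mathcal R(\Theta)$ plus the sign conditions back into the convexity/concavity argument. A secondary technical point is justifying that the minimum/maximum is genuinely attained rather than merely approached — i.e. the "only if" part of the scalar lemma — which is where $w\in\mathcal R(\Theta_{ii})$ (as opposed to merely $w\in\overline{\mathcal R(\Theta_{ii})}$) is essential; I would invoke the pseudo-inverse construction on $\widehat\Theta_{ii}$ exactly as set up in the paragraph preceding the proposition.
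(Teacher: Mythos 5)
Your proposal is correct: the block-wise reduction to the classical quadratic-functional lemma, the stacking of the two first-order conditions into $\Theta\widehat u+v=0$ (which is exactly how the full range condition $v\in\mathcal R(\Theta)$ is extracted), the convexity/concavity expansion for the converse, and the pseudo-inverse description of the solution set are all sound. The paper itself gives no proof, only a citation to \cite{MY1} and \cite{CY}, and your argument is essentially the standard one carried out in those references, so it faithfully fills the omitted proof.
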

\begin{proof}We refer the reader to see the proof in \cite{MY1} or \cite{CY}.
\end{proof}

The above argument indicates that we could discuss the quadratic
integral game by using certain Hilbert operators. Before going
further, we need the following standing assumptions which is in
force in the rest of the paper.

(H1) $A_1\in L^\infty(0,T;L^2_{\Bbb{F}}[0,T])$, $A_2(\cdot ,\cdot
)\in L^{\infty}(0,T;L^{\infty}_{\Bbb{F}}[0,T]),$ $B_i(t,s)$ and
$C_i(t,s)$ $(i=1,2)$ also satisfy the similar assumption.

For any $(X,u_1,u_2)\in L_{\mathcal{F}}^2[0,T]\times L_{\mathcal{F}%
}^2[0,T]\times L_{\mathcal{F}}^2[0,T],$ we can define the operators $\mathcal{A},$ $\mathcal{B%
}_1,$ $\mathcal{C}_1$ from $L_{\mathcal{F}}^2[0,T]$ to itself as
follows:
\begin{eqnarray*}
(\mathcal{A}X)(t) &=&\int_0^tA_1(t,s)X(s)ds+\int_0^tA_2(t,s)X(s)dW(s), \\
(\mathcal{B}_1u_1)(t)
&=&\int_0^tB_1(t,s)u_1(s)ds+\int_0^tB_2(t,s)u_1(s)dW(s), \\
(\mathcal{C}_1u_2)(t)
&=&\int_0^tC_1(t,s)u_2(s)ds+\int_0^tC_2(t,s)u_2(s)dW(s),
\end{eqnarray*}
thus we have
\[
X(t)=\varphi(t)+(\mathcal{A}X)(t)+(\mathcal{B}_1u_1)(t)+(\mathcal{C}_1u_2)(t).
\]
The following lemma character the well property of the operators
defined above.
\begin{lemma}
Let (H1) hold, then the operators $\mathcal{A},$ $\mathcal{B}_1$ and $%
\mathcal{C}_1$ are bounded operators and $\mathcal{A}$ is
quasi-nilpotent,
i.e., $\overline{\lim\limits_{k\rightarrow \infty }}\left\| \mathcal{A}%
^k\right\| ^{\frac 1k}=0.$ Consequently, $(I-\mathcal{A)}^{-1}:L_{\mathcal{F}%
}^2[0,T]\rightarrow L_{\mathcal{F}}^2[0,T]$ is bounded, hence, for any $%
\varphi (\cdot )\in L_{\mathcal{F}}^2[0,T]$ and $u_1,u_2\in L_{\mathcal{F}%
}^2[0,T],$ (3) admits a unique solution $X=(I-\mathcal{A)}^{-1}(\varphi +%
\mathcal{B}_1u_1+\mathcal{C}_1u_2).$
\end{lemma}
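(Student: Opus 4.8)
The plan is to prove the three assertions of the lemma in order: boundedness of $\mathcal{A}$, $\mathcal{B}_1$, $\mathcal{C}_1$; quasi-nilpotency of $\mathcal{A}$; and invertibility of $I-\mathcal{A}$ together with the solution formula for (3). First I would check boundedness. For $(\mathcal{A}X)(t)=\int_0^tA_1(t,s)X(s)\,ds+\int_0^tA_2(t,s)X(s)\,dW(s)$, I would take $E|(\mathcal{A}X)(t)|^2$, split into the two integrals, apply Cauchy--Schwarz to the $ds$-term and the It\^o isometry to the $dW$-term, then integrate in $t$ over $[0,T]$ and use (H1): the bound $A_1\in L^\infty(0,T;L^2_{\Bbb{F}}[0,T])$ controls $\int_0^T\big(\int_0^t|A_1(t,s)|^2ds\big)E|X(s)|^2\,ds$ after Fubini, and the bound $A_2\in L^\infty(0,T;L^\infty_{\Bbb F}[0,T])$ controls the It\^o part similarly. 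This gives $\|\mathcal{A}X\|_2\le K\|X\|_2$ for a constant $K$ depending only on $T$ and the (H1)-norms; the same computation applies verbatim to $\mathcal{B}_1$ and $\mathcal{C}_1$.

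Next, quasi-nilpotency. The point is that an operator of Volterra type with a square-integrable kernel has iterates whose norms decay factorially. The key step is an iterated estimate: by the same split-and-estimate argument, but keeping track of the upper limit of integration, one shows
\begin{eqnarray*}
E|(\mathcal{A}^kX)(t)|^2 \le \frac{(Ct)^k}{k!}\int_0^t E|X(s)|^2\,ds
\end{eqnarray*}
for a suitable constant $C$, proved by induction on $k$: the inductive hypothesis is substituted into the estimate for one more application of $\mathcal{A}$, and the extra factor $\int_0^t \tfrac{(Cs)^{k-1}}{(k-1)!}\,ds=\tfrac{(Ct)^k}{C\,k!}$ produces the next term. (The diffusion part requires a little care because the It\^o isometry eats the $ds$ from the outer integral; one absorbs the $A_2$-contribution into the constant $C$.) Integrating in $t$ over $[0,T]$ yields $\|\mathcal{A}^k\|^2\le \frac{(CT)^k}{k!}$, hence $\|\mathcal{A}^k\|^{1/k}\le \big((CT)^k/k!\big)^{1/(2k)}\to 0$ by Stirling, which is exactly quasi-nilpotency.

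Finally, once $\mathcal{A}$ is quasi-nilpotent its spectral radius is $0$, so $1$ is not in the spectrum and $(I-\mathcal{A})^{-1}=\sum_{k\ge 0}\mathcal{A}^k$ converges in operator norm on $L^2_{\mathcal F}[0,T]$ and is bounded. Rewriting (3) as $(I-\mathcal{A})X=\varphi+\mathcal{B}_1u_1+\mathcal{C}_1u_2$ and applying this inverse gives existence and uniqueness of $X=(I-\mathcal{A})^{-1}(\varphi+\mathcal{B}_1u_1+\mathcal{C}_1u_2)$, which also lies in $L^2_{\mathcal F}[0,T]$ since the right-hand side does. I expect the main obstacle to be the bookkeeping in the induction for the iterated kernel estimate --- in particular, correctly handling the mixed drift/diffusion cross-terms and the dependence of the constant $C$ on the $L^\infty(0,T;L^2_{\Bbb F})$ versus $L^\infty(0,T;L^\infty_{\Bbb F})$ norms of $A_1$ and $A_2$ --- rather than any conceptual difficulty; the adaptedness of $\mathcal{A}^kX$ at each stage should be noted but is routine.
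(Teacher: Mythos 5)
Your overall route is the right one, and it is essentially the argument the paper itself appeals to: the paper gives no proof of this lemma, saying only that it "essentially resembles" the one in Chen--Yong \cite{CY}, and that proof is exactly the standard Volterra scheme you describe (one-step $L^2$ estimate via Cauchy--Schwarz and the It\^o isometry, factorial decay of the iterates, spectral radius zero, Neumann series for $(I-\mathcal{A})^{-1}$). Two points deserve correction or care, though. First, your induction hypothesis $E|(\mathcal{A}^kX)(t)|^2\le \frac{(Ct)^k}{k!}\int_0^tE|X(s)|^2\,ds$ fails at the base case: for $k=1$ the one-step estimate only yields $C\int_0^tE|X(s)|^2\,ds$, without the extra factor $t$. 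The clean statement is $f_{k+1}(t)\le C\int_0^t f_k(s)\,ds$ with $f_k(t)=E|(\mathcal{A}^kX)(t)|^2$, which iterates to $f_k(t)\le \frac{C^k t^{k-1}}{(k-1)!}\int_0^t E|X(s)|^2\,ds$; this still gives $\|\mathcal{A}^k\|^{1/k}\to 0$, so the conclusion is unaffected, but the induction as you wrote it does not close. Second, and more substantively: unlike in \cite{CY}, the coefficients here are \emph{random}, so your step "apply Cauchy--Schwarz \ldots then use $A_1\in L^\infty(0,T;L^2_{\Bbb F}[0,T])$ to control $\int_0^T\bigl(\int_0^t|A_1(t,s)|^2ds\bigr)E|X(s)|^2\,ds$ after Fubini" silently factors $E\bigl[|A_1(t,s)|^2|X(s)|^2\bigr]$ into $E|A_1(t,s)|^2\,E|X(s)|^2$, which is not legitimate. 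For the argument to work one must read (H1) as giving an almost-sure (uniform in $\omega$) bound, i.e.\ $\int_0^t|A_1(t,s)|^2ds\le M$ a.s.\ and $|A_2(t,s)|\le M$ a.s.; with only an expectation bound $\sup_tE\int_0^T|A_1(t,s)|^2ds<\infty$ the operator $\mathcal{A}$ need not even be bounded (take $A_1(t,s)=\xi$ with $\xi\in\mathcal{F}_0$, $E\xi^2<\infty$, $E\xi^4=\infty$, and $X=\min(\xi,n)$). State that reading of (H1) explicitly and your proof goes through.
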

\begin{proof}The proof is essentially resembles the one in \cite{CY} and we omit it
here.
\end{proof}

Due to the appearance of $GX^2(T),$ some other operators are also
required to tackle it. We denote
\begin{eqnarray*}
\Delta_TX &=&\int_0^TA_1(T,s)X(s)ds+\int_0^TA_2(T,s)X(s)dW(s), \\
\Lambda_Tu_1
&=&\int_0^TB_1(T,s)u_1(s)ds+\int_0^TB_2(T,s)u_1(s)dW(s), \\
\Pi_Tu_2 &=&\int_0^TC_1(T,s)u_2(s)ds+\int_0^TC_2(T,s)u_2(s)dW(s),
\end{eqnarray*}
hence
\[
X(T)=\Delta_TX+\Lambda_Tu_1+\Pi_Tu_2+\varphi (T).
\]
Obviously $\Delta_T,$ $\Lambda_T$ and $\Pi%
_T $ are bounded operators from $L_{\mathcal{F}}^2[0,T]$ to
$L^2(\Omega ).$

In what follows, we make some conventions as,
\begin{eqnarray*}
(\mathcal{U}u)(t)=(\mathcal{B}_1u_1)(t)+(\mathcal{C}_1u_2)(t),\quad
\Gamma_Tu=\Lambda_Tu_1+\Pi_Tu_2,
\end{eqnarray*}
therefore,
\begin{eqnarray}
\mathcal{U}u=(\mathcal{B}_1,\mathcal{C}_1)\left(
\begin{array}{c}
u_1 \\
u_2
\end{array}
\right) =(\mathcal{B}_1,\mathcal{C}_1)u, \\
\Gamma_Tu=(%
\Lambda_T,\Pi_T)\left(
\begin{array}{c}
u_1 \\
u_2
\end{array}
\right) =(\Lambda_T,\Pi_T)u.
\end{eqnarray}
We define the operators $\mathcal{Q},$ $\mathcal{S}$ and
$\mathcal{R}$ as follows: for $i,j=1,2,$
\begin{eqnarray*}
&&\left\langle \mathcal{Q}X,X\right\rangle
_2=E\int_0^TQ(t)X^2(t)dt,\quad \left\langle
\mathcal{S}X,u\right\rangle
_2=E\int_0^TS(t)X(t)\cdot u(t)dt, \\
&&\left\langle \mathcal{R}u,u\right\rangle _2=E\int_0^TR(t)u(t)\cdot
u(t)dt,\quad \left\langle \mathcal{S}_iX,u_i\right\rangle
_2=E\int_0^TS_i(t)X(t)u_i(t)dt, \\
&&\left\langle \mathcal{R}_{i,j}u_i,u_j\right\rangle
_2=E\int_0^TR_{i,j}(t)u_i(t)\cdot u_j(t)dt,
\end{eqnarray*}
consequently,
\begin{eqnarray}
\mathcal{S=}\left(
\begin{array}{c}
\mathcal{S}_1 \\
\mathcal{S}_2
\end{array}
\right), \quad \mathcal{R=}\left(
\begin{array}{cc}
\mathcal{R}_{11} & \mathcal{R}_{12} \\
\mathcal{R}_{21} & \mathcal{R}_{22}
\end{array}
\right),
\end{eqnarray}
and (4) can be rewritten as
\begin{eqnarray}
J(u)=\left\langle \mathcal{Q}X,X\right\rangle _2+2 \left\langle
\mathcal{S}X,u\right\rangle _2+\left\langle
\mathcal{R}u,u\right\rangle _2+\left\langle G X(T),X(T)\right\rangle
_1.
\end{eqnarray}
Now we turn to deal with $\left\langle G X(T),X(T)\right\rangle _1$
by means of the operators defined previously.
\begin{eqnarray}
&&\left\langle G X(T),X(T)\right\rangle _1 \nonumber\\
&=&\left\langle G(\Delta_TX+\Gamma_Tu+\varphi (T)),%
\Delta_TX+\Gamma_Tu+\varphi (T)\right\rangle _1  \nonumber\\
&=&\left\langle \Delta_T^{*}G\Delta_TX,X\right\rangle
_2+2\left\langle \Gamma_T^{*}G\Delta_TX,u\right\rangle
_2+\left\langle \Gamma_T^{*}G\Gamma_Tu,u\right\rangle
_2 \nonumber \\
 &&+2\left\langle X,\Delta_T^{*}G\varphi (T)\right\rangle
_2+2\left\langle u,\Gamma_T^{*}G\varphi (T)\right\rangle
_2+\left\langle \varphi (T),\varphi (T)\right\rangle _1,
\end{eqnarray}
where $\forall \eta \in L^2(\Omega ),$ $X,$ $u\in L_{\mathcal{F}}^2[0,T],$%
\[
\left\langle \Delta_TX,\eta \right\rangle _1=\left\langle X,%
\Delta_T^{*}\eta \right\rangle _2, \quad \left\langle \Gamma%
_Tu,\eta \right\rangle _1=\left\langle u,\Gamma_T^{*}\eta
\right\rangle _2.
\]
If we denote
\begin{eqnarray}
\mathcal{Q}^{\prime }=\mathcal{Q}+\Delta_T^{*}G\Delta%
_T,\quad \mathcal{S}^{\prime }=\mathcal{S}+\Gamma_T^{*}G%
\Delta_T,\quad \mathcal{R}^{\prime }=\mathcal{R}+\Gamma%
_T^{*}G\Gamma_T,
\end{eqnarray}
then we can obtain the following expressions after substituting (9)
into (8),
\begin{eqnarray*}
J(u)&=&\left\langle \mathcal{Q}^{\prime }X,X\right\rangle
_2+2\left\langle \mathcal{S}^{\prime }X,u\right\rangle
_2+\left\langle \mathcal{R}^{\prime
}u,u\right\rangle _2 \\
 &&+2\left\langle X,\Delta_T^{*}G\varphi
(T)\right\rangle _2+2\left\langle u,\Gamma_T^{*}G\varphi
(T)\right\rangle
_2+\left\langle \varphi (T),\varphi (T)\right\rangle _1 \\
&=&\left\langle \left(
\begin{array}{cc}
\mathcal{Q}^{\prime } & \mathcal{S}^{\prime *} \\
\mathcal{S}^{\prime } & \mathcal{R}^{\prime }
\end{array}
\right) \left(
\begin{array}{c}
(I-\mathcal{A})^{-1}(\varphi +\mathcal{U}u) \\
u
\end{array}
\right) , \left(
\begin{array}{c}
(I-\mathcal{A})^{-1}(\varphi +\mathcal{U}u) \\
u
\end{array}
\right)
\right\rangle _2 \\
&&+2\left\langle (I-\mathcal{A})^{-1}(\varphi +\mathcal{U}u),%
\Delta_T^{*}G\varphi (T)\right\rangle _2+2\left\langle u,\Gamma%
_T^{*}G\varphi (T)\right\rangle _2+\left\langle \varphi (T),\varphi
(T)\right\rangle _1 \\
&=&\left\langle \left(
\begin{array}{cc}
\mathcal{Q}^{\prime } & \mathcal{S}^{^{\prime }*T} \\
\mathcal{S}^{\prime } & \mathcal{R}^{\prime }
\end{array}
\right) \left(
\begin{array}{cc}
(I-\mathcal{A})^{-1} & (I-\mathcal{A})^{-1}\mathcal{U} \\
0 & I
\end{array}
\right) \left(
\begin{array}{c}
\varphi  \\
u
\end{array}
\right) ,\right.  \\
&& \left. \left(
\begin{array}{cc}
(I-\mathcal{A})^{-1} & (I-\mathcal{A})^{-1}\mathcal{U} \\
0 & I
\end{array}
\right) \left(
\begin{array}{c}
\varphi  \\
u
\end{array}
\right) \right\rangle _2 \\
&&+2\left\langle u,\mathcal{U}^{*}(I-\mathcal{A}^{*})^{-1}%
\Delta_T^{*}G\varphi (T)+\Gamma_T^{*}G\varphi (T)\right\rangle
_2 \\
&& +2\left\langle (I-\mathcal{A})^{-1}\varphi ,\Delta%
_T^{*}G\varphi (T)\right\rangle +\left\langle \varphi (T),\varphi
(T)\right\rangle _1 \\
&=&\left\langle \Theta u,u\right\rangle _2+\left\langle %
\Theta_1\varphi ,u\right\rangle _2+\left\langle \Theta_2\varphi
,\varphi \right\rangle _2 \\
&&+2\left\langle (I-\mathcal{A})^{-1}\varphi ,\Delta%
_T^{*}G\varphi (T)\right\rangle +\left\langle \varphi (T),\varphi
(T)\right\rangle _1,
\end{eqnarray*}
where
\begin{eqnarray}
&&\Theta=(\mathcal{U}^{*T}(I-\mathcal{A}^{*})^{-1}\mathcal{Q}%
^{\prime }\mathcal{+S}^{\prime })(I-\mathcal{A})^{-1}\mathcal{U+U}^{*T}(I-%
\mathcal{A}^{*})^{-1}\mathcal{S}^{\prime *T}+\mathcal{R}^{\prime }, \nonumber \\
&&\Theta_1\varphi =(\mathcal{U}^{*T}(I-\mathcal{A}^{*})^{-1}\mathcal{%
Q}^{\prime }\mathcal{+S}^{\prime })(I-\mathcal{A})^{-1}\varphi +\mathcal{U}%
^{*T}(I-\mathcal{A}^{*})^{-1}\Delta_T^{*}G\varphi (T)+%
\Gamma_T^{*}G\varphi (T), \nonumber \\
&&\Theta_2=(I-\mathcal{A}^{*})^{-1}\mathcal{Q}^{\prime }(I-\mathcal{%
A})^{-1}.
\end{eqnarray}
In above, $A^{T}$ is the transpose of $A$. From (5), (6) and (10) we
have
\begin{eqnarray*}
&&(\mathcal{U}^{*T}(I-\mathcal{A}^{*})^{-1}\mathcal{Q}^{\prime }\mathcal{%
+S}^{\prime })(I-\mathcal{A})^{-1}\mathcal{U} \\
&=&\left(
\begin{array}{c}
\mathcal{B}_1^{*} \\
\mathcal{C}_1^{*}
\end{array}
\right) (I-\mathcal{A}^{*})^{-1}\mathcal{Q}^{\prime }(I-\mathcal{A})^{-1}(%
\mathcal{B}_1,\mathcal{C}_1) \\
&& +\left(
\begin{array}{c}
\mathcal{S}_1 \\
\mathcal{S}_2
\end{array}
\right) (I-\mathcal{A})^{-1}(\mathcal{B}_1,\mathcal{C}_1)+\left(
\begin{array}{c}
\Lambda_T^{*} \\
\Pi_T^{*}
\end{array}
\right) G\Delta_T(I-\mathcal{A})^{-1}(\mathcal{B}_1,\mathcal{C}%
_1),
\end{eqnarray*}
and
\[
\mathcal{U}^{*T}(I-\mathcal{A}^{*})^{-1}\mathcal{S}^{\prime
*T}=\left(
\begin{array}{c}
\mathcal{B}_1^{*} \\
\mathcal{C}_1^{*}
\end{array}
\right) (I-\mathcal{A}^{*})^{-1}\left[ (\mathcal{S}_1^{*},\mathcal{S}_2^{*})+%
\Delta_T^{*}G(\Lambda_T,\Pi%
_T)\right] ,
\]
so we have
\[
\Theta =\left(
\begin{array}{cc}
\Theta_{11} & \Theta_{12} \\
\Theta_{21} & \Theta_{22}
\end{array}
\right) ,
\]
where
\begin{eqnarray}
\Theta_{11}&=&\mathcal{B}_1^{*}(I-\mathcal{A}^{*})^{-1}\mathcal{Q}%
^{\prime }(I-\mathcal{A})^{-1}\mathcal{B}_1+\mathcal{S}_1(I-\mathcal{A})^{-1}%
\mathcal{B}_1+\Lambda_T^{*}G\Delta_T(I-\mathcal{A}%
)^{-1}\mathcal{B}_1 \nonumber \\
&&+\mathcal{B}_1^{*}(I-\mathcal{A}^{*})^{-1}(\mathcal{S}_1^{*}+%
\Delta_T^{*}G\Lambda_T)+\mathcal{R}_{11}+\Lambda%
_T^{*}G\Lambda_T,
\end{eqnarray}
and
\begin{eqnarray}
\Theta_{22}&=&\mathcal{C}_1^{*}(I-\mathcal{A}^{*})^{-1}\mathcal{Q}%
^{\prime }(I-\mathcal{A})^{-1}\mathcal{C}_1+\mathcal{S}_2(I-\mathcal{A})^{-1}%
\mathcal{C}_1+\Pi_T^{*}G\Delta_T(I-\mathcal{A})^{-1}%
\mathcal{C}_1 \nonumber\\
&& +\mathcal{C}_1^{*}(I-\mathcal{A}^{*})^{-1}(\mathcal{S}_2^{*}+%
\Delta_T^{*}G\Pi_T)+\mathcal{R}_{22}+\Pi_T^{*}G%
\Pi_T.
\end{eqnarray}
To conclude this section, we state a necessary and sufficient
condition of existence of saddle point for open-loop game with the
help of Proposition 3.1 and the operators above.
\begin{theorem}
Let (H1) hold, for given $\varphi (\cdot )\in
L_{\mathcal{F}}^2[0,T],$ the
open-loop game admits a saddle point $\widehat{u}\equiv (\widehat{u}_1,%
\widehat{u}_2)$ if and only if $\Theta_{11}\geq 0,$ $%
\Theta_{22}\leq 0$ and $\Theta_1\varphi \in \mathcal{R}(%
\Theta)$, where $\Theta_{11}$ and $\Theta_{22}$ are defined by (12)
and (13). In this case, any saddle point $\widehat{u}$ is a
solution of the following equation: $\Theta u+\Theta%
_1\varphi =0$ with $\Theta_1\varphi$ defined in (11), and it admits
the following representation:
\[
\widehat{u}=-\Theta ^{\dagger }\Theta_1\varphi+(I-\Theta ^{\dagger
}\Theta )v,
\]
for some $v\in L_{\mathcal{F}}^2[0,T]\times L_{\mathcal{F}}^2[0,T].$
In addition, the saddle point is unique if and only if
$\mathcal{N}(\Theta )=\{0\}.$
\end{theorem}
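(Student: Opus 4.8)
The plan is to deduce Theorem 3.1 directly from Proposition 3.1, applied on the product Hilbert space $\mathcal{H}=\mathcal{H}_1\times\mathcal{H}_2$ with $\mathcal{H}_1=\mathcal{H}_2=L^2_{\mathcal{F}}[0,T]$. First I would record that, by Lemma 3.1, for each fixed $\varphi\in L^2_{\mathcal{F}}[0,T]$ the state is the bounded affine image $X=(I-\mathcal{A})^{-1}(\varphi+\mathcal{B}_1u_1+\mathcal{C}_1u_2)$ of the controls, so $(u_1,u_2)\mapsto J(u_1,u_2)$ is a genuine quadratic-plus-affine functional on $L^2_{\mathcal{F}}[0,T]\times L^2_{\mathcal{F}}[0,T]$ and the saddle-point inequalities (1) are exactly those appearing in Proposition 3.1. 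The algebraic reduction carried out in (8)--(13) already puts $J$ into the canonical form
\[
J(u)=\langle\Theta u,u\rangle_2+2\langle\Theta_1\varphi,u\rangle_2+c(\varphi),
\]
where $\Theta$ is the operator (11) (equivalently, the operator matrix with diagonal blocks (12) and (13)), $\Theta_1\varphi$ is given by (11), and $c(\varphi)=\langle\Theta_2\varphi,\varphi\rangle_2+2\langle(I-\mathcal{A})^{-1}\varphi,\Delta_T^{*}G\varphi(T)\rangle+\langle\varphi(T),\varphi(T)\rangle_1$ collects the terms free of $u$. Since adding the constant $c(\varphi)$ leaves all three inequalities in (1) unchanged, the game for $J$ is equivalent to the game for $u\mapsto\langle\Theta u,u\rangle_2+2\langle\Theta_1\varphi,u\rangle_2$.

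Next I would verify that $\Theta$ meets the structural hypotheses of Proposition 3.1. Boundedness is immediate: $(I-\mathcal{A})^{-1}$, $\mathcal{B}_1$, $\mathcal{C}_1$ are bounded by Lemma 3.1; $\Delta_T$, $\Lambda_T$, $\Pi_T$ are bounded into $L^2(\Omega)$ by the remark preceding (5)--(6); and $\mathcal{Q}$, $\mathcal{S}_i$, $\mathcal{R}_{ij}$, $G$ are multiplications by bounded (adapted) coefficients; hence the finite sums of compositions defining $\Theta$, $\Theta_{11}$, $\Theta_{22}$ are bounded. For self-adjointness, put $P=(I-\mathcal{A})^{-1}$, so that $P^{*}=(I-\mathcal{A}^{*})^{-1}$; the multiplications $\mathcal{Q}$, $G$ are self-adjoint and $\mathcal{R}$ may be replaced by its symmetric part without changing $\langle\mathcal{R}u,u\rangle_2$, so $\mathcal{Q}'=\mathcal{Q}+\Delta_T^{*}G\Delta_T$ and $\mathcal{R}'=\mathcal{R}+\Gamma_T^{*}G\Gamma_T$ are self-adjoint, and then $\Theta^{*}=\Theta$ follows from (11) by a routine adjoint computation. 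Finally, inserting $\mathcal{U}=(\mathcal{B}_1,\mathcal{C}_1)$ and $\Gamma_T=(\Lambda_T,\Pi_T)$ into (11) and expanding the $2\times2$ operator matrix shows that its diagonal entries are precisely (12) and (13), while the off-diagonal entries are mutually adjoint; in particular $\Theta_{11}$, $\Theta_{22}$ are self-adjoint, so the conditions $\Theta_{11}\geq0$, $\Theta_{22}\leq0$ are meaningful.

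With this identification in hand, Proposition 3.1 applies with $v=\Theta_1\varphi$: a saddle point exists if and only if $\Theta_1\varphi\in\mathcal{R}(\Theta)$, $\Theta_{11}\geq0$ and $\Theta_{22}\leq0$; every saddle point $\widehat u$ solves $\Theta\widehat u+\Theta_1\varphi=0$ and admits the representation $\widehat u=-\Theta^{\dagger}\Theta_1\varphi+(I-\Theta^{\dagger}\Theta)v$ for some $v\in L^2_{\mathcal{F}}[0,T]\times L^2_{\mathcal{F}}[0,T]$; and the saddle point is unique if and only if $\mathcal{N}(\Theta)=\{0\}$. Transporting these conclusions back across the constant shift $c(\varphi)$ gives exactly the statement of Theorem 3.1.

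Since Proposition 3.1 is quoted as known and the algebraic reduction (8)--(13) is already available, there is no real analytic obstacle here; the effort lies in the operator bookkeeping. The step I expect to require the most care is confirming that the single operator $\Theta$ of (11) is self-adjoint and that (12), (13) really are its diagonal blocks, for only then do the definiteness hypotheses of Proposition 3.1 collapse onto $\Theta_{11}$ and $\Theta_{22}$ alone; keeping the correct numerical factor on the linear term, so that the equation $\Theta u+\Theta_1\varphi=0$ and its representation come out exactly as stated, is another place where a slip is easy. A minor subtlety worth flagging in the write-up is the interplay between the possibly non-symmetric data matrix $R$ (arising from the separate cross-coefficients $R_{12},R_{21}$) and the requirement that $\Theta$ be self-adjoint, which is harmless once one passes to the symmetric part.
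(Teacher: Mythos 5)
Your proposal is correct and follows essentially the same route as the paper, which omits the proof precisely because Theorem 3.1 is a direct application of Proposition 3.1 once the reduction (8)--(13) has expressed $J$ as a quadratic functional in $u$ with operator $\Theta$ and linear term $\Theta_1\varphi$. Your extra bookkeeping (boundedness and self-adjointness of $\Theta$, the symmetric part of $R$, the harmless constant $c(\varphi)$, and the factor $2$ on the linear term so that $\Theta\widehat u+\Theta_1\varphi=0$ comes out as stated) is exactly the detail the paper leaves implicit.
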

The proof is obvious and we omit it here. Note that here $\Theta_{11}\geq 0,$ $%
\Theta_{22}\leq 0$ is equivalent to the convexity of $u_1\mapsto
J_0(u_1,0)$ and the concavity of $u_2\mapsto J_0(0,u_2)$, where
$J_0(u)$ is the value of $J(u)$ when $\varphi \equiv 0.$
\section{Open-loop games via BSVIE}
In this section, to further characterize explicitly the sufficient
and necessary condition in Theorem 3.1, we will make use of an
efficient tool, i.e., BSVIEs aforementioned. Two equivalent
conditions correspondent to the one in Theorem 3.1 are proposed and
analyzed via BSVIEs. The method is designed around the scheme in
\cite{CY} but with some more delicate and sophisticated analysis
involved. At the outset, we need to prove some lemmas needed in the
sequel.
\begin{lemma}
Let (H1) hold. Then for any $\rho (\cdot )\in
L_{\mathcal{F}}^2[0,T]$, $(\mathcal{A}^{*}\rho )(t)=\sigma (t),$
$t\in [0,T], $ where
\begin{eqnarray}
&&\sigma (t)=E^{\mathcal{F}_{t}}\int_t^T[A_1(s,t)\rho
(s)+A_2(s,t)\nu
(s,t)]ds,  \nonumber \\
&& \rho (t)=E\rho (t)+\int_0^t\nu (t,s)dW(s), \quad t\in[0,T].
\end{eqnarray}
Similarly we have $\forall u_i\in L_{\mathcal{F}}^2[0,T],$ ($i=1,2),$ $(%
\mathcal{B}_1^{*}u_1)(t)=\alpha (t),$
$(\mathcal{C}_1^{*}u_2)(t)=\gamma (t),$ $t\in [0,T],$ where
\begin{eqnarray*}
&&\alpha
(t)=E^{\mathcal{F}_{t}}\int_t^T[B_1(s,t)u_1(s)+B_2(s,t)\beta
(s,t)]ds, \nonumber \\
&&u_1(t)=Eu_1(t)+\int_0^t\beta (t,s)dW(s), \quad t\in [0,T],
\end{eqnarray*}
and
\begin{eqnarray*}
&&\gamma
(t)=E^{\mathcal{F}_{t}}\int_t^T[C_1(s,t)u_2(s)+C_2(s,t)\delta
(s,t)]ds, \nonumber
\\
&&u_2(t)=Eu_2(t)+\int_0^t\delta (t,s)dW(s),\quad  t\in [0,T].
\end{eqnarray*}
\end{lemma}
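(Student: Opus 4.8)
The plan is to obtain each adjoint directly from the defining relation $\langle\mathcal{A}X,\rho\rangle_2=\langle X,\mathcal{A}^{*}\rho\rangle_2$, valid for all $X,\rho\in L^2_{\mathcal{F}}[0,T]$ (the existence and boundedness of $\mathcal{A}^{*}$ being guaranteed by Lemma 3.1), splitting the pairing into its drift part and its diffusion part. Using $\langle Y,Z\rangle_2=E\int_0^T Y(t)Z(t)\,dt$ I would first write
\[
\langle\mathcal{A}X,\rho\rangle_2=E\int_0^T\Big(\int_0^tA_1(t,s)X(s)\,ds\Big)\rho(t)\,dt+E\int_0^T\Big(\int_0^tA_2(t,s)X(s)\,dW(s)\Big)\rho(t)\,dt,
\]
and handle the two terms by different devices: Fubini for the Lebesgue part, and the martingale representation plus It\^o's isometry for the stochastic part.

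For the first term, Fubini's theorem (legitimate since $A_1\in L^{\infty}(0,T;L^2_{\Bbb{F}}[0,T])$ and $X,\rho\in L^2_{\mathcal{F}}[0,T]$, so the iterated integral is absolutely integrable) turns it into $E\int_0^T X(s)\big(\int_s^T A_1(t,s)\rho(t)\,dt\big)\,ds$; since $X(s)$ is $\mathcal{F}_s$-measurable, the tower property lets me insert $E^{\mathcal{F}_s}$ in front of the inner integral. For the second term I would invoke the martingale representation theorem to write $\rho(t)=E\rho(t)+\int_0^t\nu(t,s)\,dW(s)$ with $\nu\in L^2(0,T;L^2_{\mathcal{F}}[0,T])$; for each fixed $t$ the It\^o integral $\int_0^tA_2(t,s)X(s)\,dW(s)$ has zero mean, hence is orthogonal in $L^2(\Omega)$ to the deterministic part $E\rho(t)$, while its pairing with $\int_0^t\nu(t,s)\,dW(s)$ gives, by It\^o's isometry, $E\int_0^t A_2(t,s)X(s)\nu(t,s)\,ds$. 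A second application of Fubini and the same $\mathcal{F}_s$-conditioning then produce $E\int_0^T X(s)\,E^{\mathcal{F}_s}\!\int_s^T A_2(t,s)\nu(t,s)\,dt\,ds$.

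Adding the two contributions gives $\langle\mathcal{A}X,\rho\rangle_2=E\int_0^T X(s)\,E^{\mathcal{F}_s}\!\int_s^T[A_1(t,s)\rho(t)+A_2(t,s)\nu(t,s)]\,dt\,ds$, and after the harmless renaming $s\leftrightarrow t$ the inner expression is exactly $\sigma$ of (14); uniqueness of the adjoint yields $\mathcal{A}^{*}\rho=\sigma$. The identities for $\mathcal{B}_1^{*}$ and $\mathcal{C}_1^{*}$ follow by the identical argument, replacing $(A_1,A_2,\rho,\nu)$ with $(B_1,B_2,u_1,\beta)$ and with $(C_1,C_2,u_2,\delta)$ respectively.

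The step I expect to require the most care is the diffusion term: I must ensure that the martingale integrand $\nu$ can be chosen jointly measurable in $(t,\omega,s)$ and lies in $L^2(0,T;L^2_{\mathcal{F}}[0,T])$, so that the Fubini interchange after the isometry is justified, and I must make the orthogonality/isometry computation rigorous for a.e.\ $t$ before integrating in $t$. By contrast the drift term is a routine Fubini-and-conditioning manipulation, and the boundedness assertions are immediate from Lemma 3.1.
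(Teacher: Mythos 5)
Your proposal is correct and follows essentially the same route as the paper: identify $\mathcal{A}^{*}\rho$ through the duality $\langle\mathcal{A}X,\rho\rangle_2=\langle X,\mathcal{A}^{*}\rho\rangle_2$, split into drift and diffusion parts, use the martingale representation $\rho(t)=E\rho(t)+\int_0^t\nu(t,s)dW(s)$ together with the It\^o isometry on the diffusion term, then (stochastic) Fubini and the adaptedness of $X$ to insert $E^{\mathcal{F}_t}$, and conclude by arbitrariness of $X$; the cases of $\mathcal{B}_1^{*}$ and $\mathcal{C}_1^{*}$ are identical, just as in the paper.
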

\begin{proof}Since $\mathcal{A}$ is a bounded linear operator from
the Hilbert space $L^2_{\mathcal{F}}[0,T]$ into itself, thus the
adjoint operator $\mathcal{A}^{*}$ of $\mathcal{A}$ is well-defined.
For any $X(\cdot)\in L^2_{\mathcal{F}}[0,T],$
\begin{eqnarray*}
&&E\int_0^T (\mathcal{A}^{*}\rho)(t)X(t)dt\equiv E\int_0^T
\rho(t)(\mathcal{A}X)(t)dt \nonumber \\
&=& E\int_0^T \rho(t)dt\int_0^tA_1(t,s)X(s)ds+E\int_0^T
\rho(t)dt\int_0^tA_2(t,s)X(s)dW(s) \nonumber \\
&=& E\int_0^T X(t)dt\int_t^TA_1(s,t)\rho(s)ds+ E\int_0^T
X(t)dt\int_t^TA_2(s,t)\nu(s,t)ds \nonumber \\
&=&E\int_0^T X(t)dtE^{\mathcal{F}_{t}}\int_t^T[A_1(s,t)\rho
(s)+A_2(s,t)\nu (s,t)]ds,
\end{eqnarray*}
where we use the relation $\rho (t)=E\rho (t)+\int_0^t\nu
(t,s)dW(s)$ and stochastic Fubini theorem above, thus by the
arbitrariness of $X,$ we get (14). As to the other two results, the
proof is similar.
\end{proof}
\begin{remark}
Let us consider the following equation:
\begin{eqnarray}
Y(t)=\psi
(t)+\int_t^T[A_1(s,t)Y(s)+A_2(s,t)Z(s,t)]ds-\int_t^TZ(t,s)dW(s),
\end{eqnarray}
where $(Y(\cdot ),Z(\cdot ,\cdot ))$ is the unique M-solution of (15) and $%
\psi \in L^2(\Omega \times [0,T])$. From Lemma 4.1, we know that
$Y=E^{\mathcal{F}_{t}}\psi +\mathcal{A}^{*}Y.$ Since
$(I-\mathcal{A})^{-1}$ exists and bounded, we have
$Y=(I-\mathcal{A}^{*})^{-1}E^{\mathcal{F}_{t}}\psi.$
\end{remark}
\begin{lemma}
Let (H1) hold. Then $%
\forall \eta \in L^2(\Omega ),$ we have
\begin{eqnarray}
(\Delta_T^{*}\eta )(s)=A_1(T,s)E^{\mathcal{F}_{s}}\eta
+A_2(T,s)\theta (s), \nonumber
\\
\eta =E\eta +\int_0^T\theta (s)dW(s),\quad t\in[0,T].
\end{eqnarray}
Similarly $\forall \zeta _i\in L^2(\Omega ),$ $(i=1,2),$ we have
\begin{eqnarray*}
&&(\Lambda_T^{*}\zeta _1)(s)=B_1(T,s)E^{\mathcal{F}_{s}}\zeta
_1+B_2(T,s)\kappa _1(s),
\\
&&\zeta _1=E\zeta _1+\int_0^T\kappa _1(s)dW(s),\quad t\in[0,T],
\end{eqnarray*}
and
\begin{eqnarray*}
&&(\Pi_T^{*}\zeta _2)(s)=C_1(T,s)E^{\mathcal{F}_{s}}\zeta _2+C_2(T,s)\kappa _2(s), \\
&&\zeta _2=E\zeta _2+\int_0^T\kappa _2(s)dW(s),\quad t\in[0,T].
\end{eqnarray*}
\end{lemma}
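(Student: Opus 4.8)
The plan is to follow the same route as in the proof of Lemma 4.1, the computation being in fact simpler here: since $\Delta_T,\Lambda_T,\Pi_T$ take values in the space $L^2(\Omega)$ of terminal random variables rather than in a process space, no outer conditional expectation in the running time variable is needed. It has already been noted that $\Delta_T$ is a bounded linear operator from $L^2_{\mathcal{F}}[0,T]$ to $L^2(\Omega)$, so its adjoint $\Delta_T^{*}:L^2(\Omega)\to L^2_{\mathcal{F}}[0,T]$ is well defined, and it suffices to identify it through the defining identity $\left\langle \Delta_TX,\eta\right\rangle_1=\left\langle X,\Delta_T^{*}\eta\right\rangle_2$, valid for all $X\in L^2_{\mathcal{F}}[0,T]$ and all $\eta\in L^2(\Omega)$.

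First I would fix $\eta\in L^2(\Omega)$ and invoke the martingale representation theorem to write $\eta=E\eta+\int_0^T\theta(s)\,dW(s)$ with $\theta\in L^2_{\mathcal{F}}[0,T]$, and then expand
\[
E\big[(\Delta_TX)\eta\big]=E\Big[\eta\int_0^TA_1(T,s)X(s)\,ds\Big]+E\Big[\eta\int_0^TA_2(T,s)X(s)\,dW(s)\Big].
\]
For the first term, Fubini's theorem together with the tower property and the fact that $A_1(T,s)X(s)$ is $\mathcal{F}_s$-measurable (by (H1), $s\mapsto A_1(T,s)$ is $\Bbb{F}$-progressively measurable) turns it into $\int_0^TE\big[(E^{\mathcal{F}_s}\eta)A_1(T,s)X(s)\big]\,ds$. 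For the second term, the It\^o isometry in bilinear (``pairing'') form, applied to $\int_0^TA_2(T,s)X(s)\,dW(s)$ and $\int_0^T\theta(s)\,dW(s)$ --- the integrand $A_2(T,\cdot)X(\cdot)$ belonging to $L^2_{\mathcal{F}}[0,T]$ because $A_2\in L^\infty(0,T;L^\infty_{\Bbb{F}}[0,T])$ --- turns it into $\int_0^TE\big[\theta(s)A_2(T,s)X(s)\big]\,ds$. Adding the two pieces gives $E\big[(\Delta_TX)\eta\big]=E\int_0^TX(s)\big(A_1(T,s)E^{\mathcal{F}_s}\eta+A_2(T,s)\theta(s)\big)\,ds$, and the arbitrariness of $X$ then forces $(\Delta_T^{*}\eta)(s)=A_1(T,s)E^{\mathcal{F}_s}\eta+A_2(T,s)\theta(s)$, which is precisely (16). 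The formulas for $\Lambda_T^{*}$ and $\Pi_T^{*}$ follow word for word upon replacing $(A_1,A_2)$ by $(B_1,B_2)$ and by $(C_1,C_2)$ respectively.

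No genuine difficulty arises, so there is no real ``main obstacle''; the only points requiring attention are the measurability of $s\mapsto A_i(T,s)$, which lets the conditional expectation be moved in and out of the drift term, and the legitimacy of interchanging $E$ with the Lebesgue integral and of the pairing form of the It\^o isometry --- all guaranteed by (H1) and the square-integrability built into $L^2_{\mathcal{F}}[0,T]$ and $L^2(\Omega)$. It is worth recording, just as in Remark 4.1, that combining this lemma with Lemma 4.1 is exactly what will later allow quantities such as $\Delta_T^{*}G\varphi(T)$ and $\Gamma_T^{*}G\varphi(T)$ to be rewritten explicitly as Fredholm-type data, which is the reason for stating it.
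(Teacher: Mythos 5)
Your argument is correct and coincides with the paper's own proof: both identify $\Delta_T^{*}$ through the duality pairing $\left\langle \Delta_TX,\eta\right\rangle_1=\left\langle X,\Delta_T^{*}\eta\right\rangle_2$, split $E[\eta\,\Delta_TX]$ into drift and stochastic-integral parts, handle the former by the tower property (inserting $E^{\mathcal{F}_s}\eta$) and the latter via the representation $\eta=E\eta+\int_0^T\theta(s)\,dW(s)$ together with the It\^o isometry in pairing form, and conclude by the arbitrariness of $X$. No discrepancies to report.
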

\begin{proof}Because $\Delta$ is a bounded linear operator from
the Hilbert space $L^2_{\mathcal{F}}[0,T]$ into $L^2(\Omega)$, thus
the adjoint operator $\Delta^{*}$ of $\Delta$, which is defined from
$L^2(\Omega)$ into $L^2_{\mathcal{F}}[0,T]$,
is well-defined. For any $\eta \in L^2(\Omega ),$ $X\in L_{\mathcal{F%
}}^2[0,T],$ we have
\begin{eqnarray}
E\int_0^T(\Delta_T^{*}\eta )(s)X(s)ds &=&\left\langle %
\Delta_T^{*}\eta ,X\right\rangle _2=\left\langle \eta ,\Delta%
_TX\right\rangle _1=E\eta \Delta_TX  \nonumber \\
\ &=&E\int_0^TA_1(T,s)\eta X(s)ds+E\int_0^TA_2(T,s)X(s)\eta dW(s) \nonumber \\
\ &=&E\int_0^TA_1(T,s)\eta X(s)ds+E\int_0^TA_2(T,s)\theta (s)X(s)ds  \nonumber\\
\ &=&E\int_0^T[A_1(T,s)\eta +A_2(T,s)\theta (s)]X(s)ds  \nonumber\\
\ &=&E\int_0^T[A_1(T,s)E^{\mathcal{F}_{s}}\eta +A_2(T,s)\theta
(s)]X(s)ds.
\end{eqnarray}
Since $X(\cdot )\in L_{\mathcal{F}}^2[0,T]$ is arbitrary, it follows
from (17) that,
\[
(\Delta_T^{*}\eta )(s)=A_1(T,s)E^{\mathcal{F}_{s}}\eta
+A_2(T,s)\theta (s).
\]
As to the others, the proof is similar.
\end{proof}

The previous two lemmas show the way to express the Hilbert
operators more clearly. The following two theorems are the two main
results in this section, which are established with the help of the
two lemmas above.
\begin{theorem}
Let (H1) hold, then for $i=1,2,$ and any $u_i(\cdot )\in
L_{\mathcal{F}}^2[0,T],$ $(X^{u_1},Y^{u_1},Z^{u_1},\lambda ^{u_1})$
is the unique M-solution of the following decoupled FBSVIE:
\begin{equation}
\left\{
\begin{array}{lc}
X^{u_1}(t)=\displaystyle\int_0^t[A_1(t,s)X^{u_1}(t)+B_1(t,s)u_1(s)]ds \\
\quad \quad \quad +\displaystyle\int_0^t[A_2(t,s)X^{u_1}(t)+B_2(t,s)u_1(s)]dW(s), \\
Y^{u_1}(t)=
Q(t)X^{u_1}(t)+S_1(t)u_1(t)+A_1(T,t)GX^{u_1}(T)+A_2(T,t)\theta
_1(t) \\
\quad \quad \quad +\displaystyle\int_t^T[A_1(s,t)Y^{u_1}(s)+A_2(s,t)Z^{u_1}(s,t)]ds-%
\displaystyle\int_t^TZ^{u_1}(t,s)dW(s),\\
\lambda
^{u_1}(t)=E^{\mathcal{F}_{t}}\displaystyle\int_t^T[B_1(s,t)Y^{u_1}(s)+B_2(s,t)Z
^{u_1}(s,t)]ds,
\end{array}
\right.
\end{equation}
and $(X^{u_2},Y^{u_2},Z^{u_2},\lambda ^{u_2})$ is the unique
M-solution of the following decoupled FBSVIE:
\begin{equation}
\left\{
\begin{array}{lc}
X^{u_2}(t)=\displaystyle\int_0^t[A_1(t,s)X^{u_2}(t)+C_1(t,s)u_2(s)]ds \\
\quad \quad \quad +\displaystyle\int_0^t[A_2(t,s)X^{u_2}(t)+C_2(t,s)u_2(s)]dW(s), \\
Y^{u_2}(t)=
Q(t)X^{u_2}(t)+S_2(t)u_2(t)+A_1(T,t)GX^{u_2}(T)+A_2(T,t)\theta
_2(t) \\
\quad \quad \quad +\displaystyle\int_t^T[A_1(s,t)Y^{u_2}(s)+A_2(s,t)Z^{u_2}(s,t)]ds-%
\displaystyle\int_t^TZ^{u_2}(t,s)dW(s),\\
\lambda
^{u_2}(t)=E^{\mathcal{F}_{t}}\displaystyle\int_t^T[C_1(s,t)Y^{u_1}(s)+C_2(s,t)Z
^{u_2}(s,t)]ds,
\end{array}
\right.
\end{equation}
where $i=1,2,$
\begin{eqnarray*}
GX^{u_i}(T) &=&EGX^{u_i}(T)+\int_0^T\theta _i(s)dW(s).
\end{eqnarray*}
Then $\Theta_{11}\geq 0$ is equivalent to: $\forall u_1(t)\in L_{%
\mathcal{F}}^2[0,T],$
\begin{eqnarray}
&&E\int_0^T[\lambda
^{u_1}(s)+S_1(s)X^{u_1}(s)+R_{11}(s)u_1(s)]u_1(s)ds \nonumber\\
&&+E\int_0^T[B_1(T,s)E^{\mathcal{F}_{s}}GX^{u_1}(T)+B_2(T,s)\theta
_1(s)]u_1(s)ds\geq 0,
\end{eqnarray}
and $\Theta_{22}\leq 0$ is equivalent to: $\forall u_2(t)\in L_{%
\mathcal{F}}^2[0,T],$
\begin{eqnarray}
&&E\int_0^T[\lambda
^{u_2}(s)+S_2(s)X^{u_2}(s)+R_{22}(s)u_2(s)]u_2(s)ds \nonumber\\
&&+E\int_0^T[C_1(T,s)E^{\mathcal{F}_{s}}GX^{u_2}(T)+C_2(T,s)\theta
_2(s)]u_2(s)ds\leq 0.
\end{eqnarray}
\end{theorem}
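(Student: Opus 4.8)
The plan is to reduce both equivalences to a single identity: that the left-hand side of (20) equals $\langle\Theta_{11}u_1,u_1\rangle_2$ and the left-hand side of (21) equals $\langle\Theta_{22}u_2,u_2\rangle_2$. Once this is shown, $\Theta_{11}\geq0$ (meaning $\langle\Theta_{11}u_1,u_1\rangle_2\geq0$ for all $u_1$) is by definition the same as (20) for all $u_1$, and $\Theta_{22}\leq0$ the same as (21) for all $u_2$. I carry out the argument for the pair $\Theta_{11}$, (20); the pair $\Theta_{22}$, (21) is obtained verbatim after replacing $(\mathcal{B}_1,\Lambda_T,S_1,R_{11})$ by $(\mathcal{C}_1,\Pi_T,S_2,R_{22})$ and reversing the inequality.

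First the unique solvability. With $u_2\equiv0$ and $\varphi\equiv0$, the forward SVIE in (18) is a special case of (3), so by Lemma 3.1 it has the unique solution $X^{u_1}=(I-\mathcal{A})^{-1}\mathcal{B}_1u_1\in L_{\mathcal{F}}^2[0,T]$, and $X^{u_1}(T)=\Delta_TX^{u_1}+\Lambda_Tu_1\in L^2(\Omega)$ because $\Delta_T,\Lambda_T$ are bounded. Hence the free term $\psi(t)=Q(t)X^{u_1}(t)+S_1(t)u_1(t)+A_1(T,t)GX^{u_1}(T)+A_2(T,t)\theta_1(t)$ lies in $L^2(\Omega\times[0,T])$ by (H1) and the boundedness of $Q,S_1,G$, while the generator $A_1(s,t)Y+A_2(s,t)Z$ is affine in $(Y,Z)$ with Lipschitz coefficient $|A_1(s,t)|+|A_2(s,t)|$, which meets the integrability hypothesis of Proposition 2.1 thanks to (H1). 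So Proposition 2.1 gives the unique M-solution $(Y^{u_1},Z^{u_1})$, and $\lambda^{u_1}$ is then the explicit expression in (18); this produces the claimed M-solution of (18), and likewise for (19).

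Next, the identification of (20). The M-solution identity $Y^{u_1}(s)=EY^{u_1}(s)+\int_0^sZ^{u_1}(s,r)dW(r)$ together with the $\mathcal{B}_1^{*}$-formula of Lemma 4.1 gives $\lambda^{u_1}=\mathcal{B}_1^{*}Y^{u_1}$, and the $\Lambda_T^{*}$-formula of Lemma 4.2 (with $\zeta_1=GX^{u_1}(T)$, $\kappa_1=\theta_1$) identifies the second line of (20) with $\langle GX^{u_1}(T),\Lambda_Tu_1\rangle_1$; so the left-hand side of (20) equals $\langle\mathcal{B}_1^{*}Y^{u_1},u_1\rangle_2+\langle\mathcal{S}_1X^{u_1},u_1\rangle_2+\langle\mathcal{R}_{11}u_1,u_1\rangle_2+\langle GX^{u_1}(T),\Lambda_Tu_1\rangle_1$. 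The heart of the proof is a forward--backward duality standing in for the (here unavailable) It\^{o} integration by parts: since $\mathcal{B}_1u_1=(I-\mathcal{A})X^{u_1}$, one has $\langle\mathcal{B}_1^{*}Y^{u_1},u_1\rangle_2=\langle(I-\mathcal{A}^{*})Y^{u_1},X^{u_1}\rangle_2$, and because $X^{u_1}$ is adapted, $\mathcal{A}^{*}Y^{u_1}$ is adapted, and conditioning the backward equation of (18) on $\mathcal{F}_t$ yields $E^{\mathcal{F}_t}Y^{u_1}(t)-(\mathcal{A}^{*}Y^{u_1})(t)=E^{\mathcal{F}_t}\psi(t)$ (use Lemma 4.1 to recognise the $ds$-integral as $(\mathcal{A}^{*}Y^{u_1})(t)$ and note the $dW$-integral has zero $\mathcal{F}_t$-mean), this last inner product equals $\langle\psi,X^{u_1}\rangle_2$. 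Substituting the explicit $\psi$, again inserting $E^{\mathcal{F}_t}$ (legitimate because $X^{u_1}(t)$ and $A_1(T,t)$ are $\mathcal{F}_t$-measurable) and using the $\Delta_T^{*}$-formula of Lemma 4.2, I get $\langle\psi,X^{u_1}\rangle_2=\langle\mathcal{Q}X^{u_1},X^{u_1}\rangle_2+\langle\mathcal{S}_1X^{u_1},u_1\rangle_2+\langle GX^{u_1}(T),\Delta_TX^{u_1}\rangle_1$. Collecting terms and using $\Delta_TX^{u_1}+\Lambda_Tu_1=X^{u_1}(T)$, the left-hand side of (20) equals $\langle\mathcal{Q}X^{u_1},X^{u_1}\rangle_2+2\langle\mathcal{S}_1X^{u_1},u_1\rangle_2+\langle\mathcal{R}_{11}u_1,u_1\rangle_2+\langle GX^{u_1}(T),X^{u_1}(T)\rangle_1=J_0(u_1,0)$, and comparing with (12) (plug $X^{u_1}=(I-\mathcal{A})^{-1}\mathcal{B}_1u_1$ and $\mathcal{Q}^{\prime}=\mathcal{Q}+\Delta_T^{*}G\Delta_T$ into (12)) one sees this is exactly $\langle\Theta_{11}u_1,u_1\rangle_2$. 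Hence (20) holds for all $u_1$ iff $\Theta_{11}\geq0$, and the parallel computation shows the left-hand side of (21) equals $J_0(0,u_2)=\langle\Theta_{22}u_2,u_2\rangle_2$, so (21) holds for all $u_2$ iff $\Theta_{22}\leq0$.

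I expect the duality identity $\langle(I-\mathcal{A}^{*})Y^{u_1},X^{u_1}\rangle_2=\langle\psi,X^{u_1}\rangle_2$ to be the only genuinely delicate point: one must be careful that the backward relation for $Y^{u_1}$ closes up only after conditioning on $\mathcal{F}_t$, and that the exchanges of $\mathcal{A},\mathcal{B}_1,\Delta_T,\Lambda_T$ with their adjoints are justified by the stochastic Fubini theorem exactly as in the proofs of Lemmas 4.1 and 4.2. Everything else is routine bookkeeping with bounded operators on $L_{\mathcal{F}}^2[0,T]$.
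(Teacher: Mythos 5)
Your argument is correct, but it runs along a different track than the paper's. The paper works pointwise at the operator level: starting from the explicit formula (12) for $\Theta_{11}$, it substitutes $X^{u_1}=(I-\mathcal{A})^{-1}\mathcal{B}_1u_1$, uses Lemma 4.2 to rewrite $\Delta_T^{*}GX^{u_1}(T)$ and $\Lambda_T^{*}GX^{u_1}(T)$ through $\theta_1$, uses Remark 4.1 to recognise $(I-\mathcal{A}^{*})^{-1}$ applied to the resulting free term as the M-solution $Y^{u_1}$ of the BSVIE in (18), and then Lemma 4.1 to identify $\mathcal{B}_1^{*}Y^{u_1}=\lambda^{u_1}$; the outcome is the identity of processes $(\Theta_{11}u_1)(s)=\lambda^{u_1}(s)+S_1(s)X^{u_1}(s)+R_{11}(s)u_1(s)+B_1(T,s)E^{\mathcal{F}_s}GX^{u_1}(T)+B_2(T,s)\theta_1(s)$, from which (20) is literally $\langle\Theta_{11}u_1,u_1\rangle_2\geq0$. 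You instead work at the level of quadratic forms: you pair the left-hand side of (20) against $u_1$, use the duality $\langle\mathcal{B}_1^{*}Y^{u_1},u_1\rangle_2=\langle(I-\mathcal{A}^{*})Y^{u_1},X^{u_1}\rangle_2$ together with the conditioned backward equation $(I-\mathcal{A}^{*})Y^{u_1}=E^{\mathcal{F}_\cdot}\psi$ to show that the left-hand side of (20) equals $J_0(u_1,0)=\langle\Theta_{11}u_1,u_1\rangle_2$, and similarly for (21). This is a genuine variant: your route only establishes equality of the quadratic forms, which is exactly what the equivalence needs and which dovetails with the paper's own remark after Theorem 3.1 that $\Theta_{11}\geq0$ means convexity of $u_1\mapsto J_0(u_1,0)$; the paper's pointwise identification is stronger and is what gets reused in Theorem 4.2, so the paper's bookkeeping pays off later. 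The ingredients (Lemmas 4.1--4.2, boundedness of $(I-\mathcal{A})^{-1}$, the M-solution property) are the same in both.

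One small caveat: your appeal to Proposition 2.1 for the solvability of the backward equation in (18)--(19) is not quite justified as stated, since under (H1) the coefficients $A_1(s,t)$, $A_2(s,t)$ may be random and $A_1$ is only square-integrable, whereas Proposition 2.1 asks for a deterministic Lipschitz kernel $L(t,s)$ with $\sup_t\int_t^TL^{2+\epsilon}(t,s)ds<\infty$. The clean route, which is the one the paper implicitly takes (Remark 4.1) and which you in fact use later in your duality step, is to observe that the M-solution is given by $Y^{u_1}=(I-\mathcal{A}^{*})^{-1}E^{\mathcal{F}_\cdot}\psi$ with $(I-\mathcal{A}^{*})^{-1}$ bounded; this settles existence and uniqueness without invoking Proposition 2.1, so the gap is one of citation rather than of substance.
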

\begin{proof}It is clear that $\forall u_1(t)\in L_{\mathcal{F}%
}^2[0,T],$
\begin{eqnarray*}
&&\ \mathcal{B}_1^{*}(I-\mathcal{A}^{*})^{-1}(\mathcal{Q}^{\prime }(I-%
\mathcal{A})^{-1}\mathcal{B}_1u_1+\mathcal{S}_1^{*}u_1+\Delta%
_T^{*}G\Lambda_Tu_1) \\
\ &=&\mathcal{B}_1^{*}(I-\mathcal{A}^{*})^{-1}[(\mathcal{Q}+\Delta_T^{*}G\Delta_T)(I-\mathcal{A}%
)^{-1}\mathcal{B}_1u_1+\mathcal{S}_1^{*}u_1
+\Delta_T^{*}G\Lambda_Tu_1] \\
\ &=&\mathcal{B}_1^{*}(I-\mathcal{A}^{*})^{-1}(QX^{u_1}+S_1u_1+%
\Delta_T^{*}GX^{u_1}(T)),
\end{eqnarray*}
and
\begin{eqnarray*}
&& \mathcal{S}_1(I-\mathcal{A})^{-1}\mathcal{B}_1u_1+\mathcal{R}_{11}u_1+%
\Lambda_T^{*}G\Delta_T(I-\mathcal{A})^{-1}\mathcal{B}%
_1u_1+\Lambda_T^{*}G\Lambda_Tu_1 \\
 &=&S_1X^{u_1}+R_{11}u_1+\Lambda_T^{*}GX^{u_1}(T),
\end{eqnarray*}
where $X^{u_{1}}(t)$ and $X^{u_1}(T)$ can be expressed by
\begin{eqnarray*}
X^{u_{1}}(t)&=&\int_0^tB_1(t,s)u_1(s)ds+\int_0^tB_2(t,s)u_1(s)dW(s)\\
&&+\int_0^tA_1(t,s)X^{u_1}(s)ds+\int_0^tA_2(t,s)X^{u_1}(s)dW(s),\\
X^{u_1}(T)&=&\int_0^TB_1(T,s)u_1(s)ds+\int_0^TB_2(T,s)u_1(s)dW(s)\\
&&+\int_0^TA_1(T,s)X^{u_1}(s)ds+\int_0^TA_2(T,s)X^{u_1}(s)dW(s),
\end{eqnarray*}
thereby
\begin{eqnarray*}
(\Theta_{11}u)&=&\mathcal{B}_1^{*}(I-\mathcal{A}%
^{*})^{-1}[QX^{u_1}+S_1u_1+A_1(T,\cdot)E^{\mathcal{F}_{\cdot}}GX^{u_1}(T) \\
&&+A_2(T,\cdot)\theta
_1(\cdot)]+S_1X^{u_1}+R_{11}u_1\\
&&+B_1(T,\cdot)E^{\mathcal{F}_{\cdot}}GX^{u_1}(T)+B_2(T,\cdot)\theta
_1(\cdot),
\end{eqnarray*}
where
\[
GX^{u_1}(T)=EGX^{u_1}(T)+\int_0^T\theta _1(s)dW(s).
\]
By Lemma 4.1 and Lemma 4.2 we have
\begin{eqnarray*}
&&(\Theta_{11}u)(s)=\lambda
^{u_1}(s)+S_1(s)X^{u_1}(s)+R_{11}(s)u_1(s)\\
&&+B_1(T,s)E^{\mathcal{F}_{s}}GX^{u_1}(T)+B_2(T,s)\theta _1(s),
\end{eqnarray*}
where $(Y^{u_1},Z^{u_1},\lambda ^{u_1}$ is the unique M-solution of
the following BSVIEs,
\begin{equation}
\left\{
\begin{array}{lc}
Y^{u_1}(t)=Q(t)X^{u_1}(t)+S_1(t)u_1(t)+A_1(T,t)E^{\mathcal{F}_{t}}GX^{u_1}(T)+A_2(T,t)\theta
_1(t)\\
\quad \quad \quad +\displaystyle \int_t^T[A_1(s,t)Y^{u_1}(s)+A_2(s,t)Z^{u_1}(s,t)]ds-%
\displaystyle\int_t^TZ^{u_1}(t,s)dW(s),\\
\lambda
^{u_1}(t)=E^{\mathcal{F}_{t}}\displaystyle\int_t^T[B_1(s,t)Y^{u_1}(s)+B_2(s,t)Z
^{u_1}(s,t)]ds.
\end{array}
\right.
\end{equation}
In the similar method we have
\begin{eqnarray*}
(\Theta_{22}u) &=&\mathcal{C}_1^{*}(I-\mathcal{A}%
^{*})^{-1}[QX^{u_2}+S_2u_2+\Delta_T^{*}GX^{u_2}(T)] \\
&&\ +S_2X^{u_2}+R_{22}u_2+\Pi
_T^{*}GX^{u_2}(T) \\
\ &=&\lambda
^{u_2}+S_2X^{u_2}+R_{22}u_2+C_1(T,s)E^{\mathcal{F}_{\cdot}}GX^{u_2}(T)+C_2(T,\cdot)\theta
_2(\cdot),
\end{eqnarray*}
and
\[
GX^{u_2}(T)=EGX^{u_2}(T)+\int_0^T\theta _2(s)dW(s),
\]
where $(Y^{u_2},Z^{u_2},\lambda ^{u_2}$ is the unique M-solution of
the following BSVIEs,
\begin{equation*}
\left\{
\begin{array}{lc}
Y^{u_2}(t)
=QX^{u_2}(t)+S_2u_2(t)+A_1(T,t)E^{\mathcal{F}_{t}}GX^{u_2}(T)+A_2(T,t)\theta
_2(t)\\
\quad \quad \quad +\displaystyle\int_t^T[A_1(s,t)Y^{u_2}(s)+A_2(s,t)Z^{u_2}(s,t)]ds-%
\displaystyle\int_t^TZ^{u_2}(t,s)dW(s),\\
\lambda
^{u_2}(t)=E^{\mathcal{F}_{t}}\displaystyle\int_t^T[C_1(s,t)Y^{u_2}(s)+C_2(s,t)Z
^{u_2}(s,t)]ds.
\end{array}
\right.
\end{equation*}
Hence the conclusion hold naturally.
\end{proof}

Note that (18) or (19) admits a unique M-solution
$(X^{u_i},Y^{u_i},Z^{u_i},\lambda ^{u_i})$ by which we mean that
$(Y^{u_i},Z^{u_i})$ is the unique M-solution of the second BSVIE and
$(X^{u_i},\lambda ^{u_i})$ is the unique adapted solution of the
other two equations.
\begin{theorem}
Let (H1) hold, $\varphi (\cdot )\in L_{\mathcal{F}%
}^2[0,T],$ then
\[
(\Theta u)(t)+(\Theta_1\varphi )(t)=\lambda (t)+(SX)(t)%
\mathcal{+}(Ru)(t)+\Xi_1(t)GX(T)+%
\Xi_2(t)\theta (t),
\]
where $\lambda (\cdot )$ satisfies
\begin{eqnarray}
\lambda (t)=E^{\mathcal{F}_{t}}\int_t^T\left[\left(
\begin{array}{c}
B_1(s,t) \\
C_1(s,t)
\end{array}
\right)Y(s)+\left(
\begin{array}{c}
B_2(s,t)  \\
 C_2(s,t)
\end{array}
\right) Z(s,t)\right]ds,
\end{eqnarray}
$Y(\cdot )$ is the unique M-solution of BSVIE
\begin{eqnarray}
Y(t) &=&Q(t)X(t)+S^T(t)u(t)+A_1(T,t)E^{\mathcal{F}_{t}}GX(T)+A_2(T,t)\theta (t)\nonumber \\
&&\ +\int_t^T[A_1(s,t)Y(s)+A_2(s,t)Z(s,t)]ds-\int_t^TZ(t,s)dW(s),
\end{eqnarray}
$GX(T)=EGX(T)+\int_0^T\theta(s)dW(s),$ and $X(t)$ is the unique
solution of (3).
Consequently, the condition $\Theta_1\varphi\in\mathcal{R}(\Theta)$ holds if and only if there is a $\widehat{u}%
(\cdot )$ such that
\begin{eqnarray}
\lambda (t)+(SX)(t)%
\mathcal{+}(R\widehat{u})(t)+\Xi_1(t)GX(T)+%
\Xi_2(t)\theta (t)=0,
\end{eqnarray}
where $\Xi_i$ are defined below.
\end{theorem}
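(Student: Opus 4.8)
The plan is to prove the displayed identity by a direct operator computation parallel to the proof of Theorem 4.3, now keeping both control components together and carrying the inhomogeneous term $\varphi$ along. First I would add the expressions for $\Theta$ and for $\Theta_1\varphi$ from (11) and use $X=(I-\mathcal{A})^{-1}(\varphi+\mathcal{U}u)$ (Lemma 3.1) to merge the two copies of $(\mathcal{U}^{*T}(I-\mathcal{A}^{*})^{-1}\mathcal{Q}^{\prime}+\mathcal{S}^{\prime})(I-\mathcal{A})^{-1}$, acting respectively on $\mathcal{U}u$ and on $\varphi$, into the single term $(\mathcal{U}^{*T}(I-\mathcal{A}^{*})^{-1}\mathcal{Q}^{\prime}+\mathcal{S}^{\prime})X$. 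Then, using the definitions (10) of $\mathcal{Q}^{\prime},\mathcal{S}^{\prime},\mathcal{R}^{\prime}$ together with $X(T)=\Delta_TX+\Gamma_Tu+\varphi(T)$, I would collapse the primed operators: the block multiplied by $\mathcal{U}^{*T}(I-\mathcal{A}^{*})^{-1}$, namely $\mathcal{Q}^{\prime}X+(\mathcal{S}^{\prime})^{*T}u+\Delta_T^{*}G\varphi(T)$, reduces to $\mathcal{Q}X+\mathcal{S}^{*T}u+\Delta_T^{*}GX(T)$, while the remaining terms $\mathcal{S}^{\prime}X+\mathcal{R}^{\prime}u+\Gamma_T^{*}G\varphi(T)$ reduce to $\mathcal{S}X+\mathcal{R}u+\Gamma_T^{*}GX(T)$. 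This brings the left-hand side to the form
\[
\Theta u+\Theta_1\varphi=\mathcal{U}^{*T}(I-\mathcal{A}^{*})^{-1}\bigl(\mathcal{Q}X+\mathcal{S}^{*T}u+\Delta_T^{*}GX(T)\bigr)+\mathcal{S}X+\mathcal{R}u+\Gamma_T^{*}GX(T).
\]

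Next I would translate each operator block into the BSVIE language. Put $\psi:=\mathcal{Q}X+\mathcal{S}^{*T}u+\Delta_T^{*}GX(T)$; this lies in $L_{\mathcal{F}}^2[0,T]$ and, by the explicit form of $\Delta_T^{*}$ in Lemma 4.2, is adapted, so Remark 4.1 identifies $Y:=(I-\mathcal{A}^{*})^{-1}\psi$ with the $Y$-component of the unique M-solution of the BSVIE one obtains by expanding $\psi$ through Lemma 4.2, which is exactly (24); the unique M-solvability comes from Proposition 2.1, the equation being linear with coefficients bounded under (H1), and it is worth noting that $X$ — hence the random variable $X(T)$ appearing in the free term — is already fixed by the forward equation (3) through Lemma 3.1, so (24) is a genuine (decoupled) BSVIE, not a forward-backward system. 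Applying the formulas of Lemma 4.1 for $\mathcal{B}_1^{*}$ and $\mathcal{C}_1^{*}$ to the process $Y$, whose martingale representation kernel is precisely $Z$ because $Y$ is an M-solution (so $Y(t)=E^{\mathcal{F}_{t}}Y(t)+\int_0^tZ(t,s)dW(s)$), yields $\mathcal{U}^{*T}Y=\lambda$ with $\lambda$ as in (23). Finally $(\mathcal{S}X)(t)=S(t)X(t)=(SX)(t)$ and $(\mathcal{R}u)(t)=R(t)u(t)=(Ru)(t)$, while, once more by Lemma 4.2, $(\Gamma_T^{*}GX(T))(t)=(B_1(T,t),C_1(T,t))^{T}E^{\mathcal{F}_{t}}GX(T)+(B_2(T,t),C_2(T,t))^{T}\theta(t)$, which is what is abbreviated by $\Xi_1(t)GX(T)+\Xi_2(t)\theta(t)$ with $\Xi_1(t)(\cdot)=(B_1(T,t),C_1(T,t))^{T}E^{\mathcal{F}_{t}}(\cdot)$ and $\Xi_2(t)=(B_2(T,t),C_2(T,t))^{T}$. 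Assembling these pieces gives the asserted formula for $(\Theta u)(t)+(\Theta_1\varphi)(t)$.

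For the last assertion, recall that $\Theta_1\varphi\in\mathcal{R}(\Theta)$ means there is some $\widehat{u}\in L_{\mathcal{F}}^2[0,T]\times L_{\mathcal{F}}^2[0,T]$ with $\Theta\widehat{u}+\Theta_1\varphi=0$. Applying the identity just established with $u=\widehat{u}$, and with $X,Y,\lambda,\theta$ the processes attached to $\widehat{u}$ and $\varphi$ through (3) and (24), the relation $\Theta\widehat{u}+\Theta_1\varphi=0$ becomes equivalent to $\lambda(t)+(SX)(t)+(R\widehat{u})(t)+\Xi_1(t)GX(T)+\Xi_2(t)\theta(t)=0$, that is, to (25); conversely any $\widehat{u}$ solving (25) satisfies $\Theta\widehat{u}=-\Theta_1\varphi$.

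The work here is bookkeeping rather than analysis, and that is precisely where the attention must go: one has to push the adjoints $\mathcal{U}^{*T}$, $(I-\mathcal{A}^{*})^{-1}$, $\Delta_T^{*}$ and $\Gamma_T^{*}$ through their representations in Lemmas 4.1 and 4.2 without a transpose or sign slip, and one has to be sure that the $Z$ produced by Lemma 4.1 is literally the $Z$ of the M-solution of (24); this is the only point where the M-solution constraint is used, and it is what legitimizes the identification of $\mathcal{U}^{*T}Y$ with $\lambda$. A minor additional point is that the presence of $X(T)$ in the free term of (24) does not create any genuine forward-backward coupling here, so Proposition 2.1 applies without modification.
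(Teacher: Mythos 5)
Your proposal is correct and follows essentially the same route as the paper: expand $\Theta u+\Theta_1\varphi$ from (10)--(11), use $X=(I-\mathcal{A})^{-1}(\varphi+\mathcal{U}u)$ and $X(T)=\Delta_TX+\Gamma_Tu+\varphi(T)$ to collapse the primed operators into $\mathcal{U}^{*T}(I-\mathcal{A}^{*})^{-1}(\mathcal{Q}X+\mathcal{S}^{*T}u+\Delta_T^{*}GX(T))+\mathcal{S}X+\mathcal{R}u+\Gamma_T^{*}GX(T)$, and then translate via Lemmas 4.1, 4.2 and Remark 4.1 into the BSVIE (24) and the expression (23) for $\lambda$, with the range condition handled exactly as in the paper. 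The only difference is cosmetic (you merge the $\varphi$ and $\mathcal{U}u$ contributions directly rather than through the uncontrolled state $X^{\varphi}$ as the paper does), and your explicit remarks on where the M-solution property enters are a faithful account of what the paper's argument implicitly uses.
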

\begin{proof}It follows from (11) that
\begin{eqnarray*}
(\Theta_1\varphi )(t) &=&[\mathcal{U}^{*T}(I-\mathcal{A}^{*})^{-1}%
\mathcal{Q}^{\prime }(I-\mathcal{A})^{-1}\varphi
](t)\mathcal{+[S}^{\prime
}(I-\mathcal{A})^{-1}\varphi ](t) \\
&&\ +[\mathcal{U}^{*T}(I-\mathcal{A}^{*})^{-1}\Delta_T^{*}G\varphi
(T)](t)+[\Gamma_T^{*}G\varphi (T)](t), \\
\  &=&\mathcal{U}^{*T}(I-\mathcal{A}^{*})^{-1}[(QX^\varphi )(t)+(%
\Delta_T^{*}G\Delta_TX^\varphi )(t)+(\Delta%
_T^{*}G\varphi (T))(t)] \\
&&\ +(SX^\varphi )(t)+(\Gamma_T^{*}G\Delta_TX^\varphi )(t)+(\Gamma
_T^{*}G\varphi (T))(t),
\end{eqnarray*}
where
\begin{eqnarray*}
X^{\varphi}(t)=\varphi(t)+\int_0^tA_1(t,s)X^{\varphi}(s)ds+\int_0^tA_2(t,s)X^{\varphi}(s)dW(s).
\end{eqnarray*}
 So we have
\begin{eqnarray*}
&&\ (\Theta u)(t)+(\Theta_1\varphi )(t) \\
\  &=&[\mathcal{U}^{*T}(I-\mathcal{A}^{*})^{-1}(QX+S^Tu)](t) \\
&&\ +[\mathcal{U}^{*T}(I-\mathcal{A}^{*})^{-1}(\Delta_T^{*}G%
\Delta_TX+\Delta_T^{*}G\Gamma_Tu+%
\Delta_T^{*}G\varphi (T))](t) \\
&&\ \mathcal{+}(SX)(t)+(\Gamma_T^{*}G%
\Delta_TX)(t)+(Ru)(t)+(\Gamma_T^{*}G%
\Gamma_Tu)(t)+(\Gamma_T^{*}G\varphi (T))(t) \\
\  &=&[\mathcal{U}^{*T}(I-\mathcal{A}^{*})^{-1}(QX+S^Tu+\Delta%
_T^{*}GX(T))](t) \\
&&\ +(SX)(t)\mathcal{+}(Ru)(t)+\Xi_1(t)E^{\mathcal{F}_{t}}GX(T)+%
\Xi_2(t)\theta (t) \\
\  &=&\lambda (t)+(SX)(t)\mathcal{+}(Ru)(t)+\Xi%
_1(t)E^{\mathcal{F}_{t}}GX(T)+\Xi_2\theta (t),
\end{eqnarray*}
where
\[
\Xi_1(t)=\left(
\begin{array}{c}
B_1(T,t) \\
C_1(T,t)
\end{array}
\right) , \quad \Xi_2(t)=\left(
\begin{array}{c}
B_2(T,t) \\
C_2(T,t)
\end{array}
\right) ,
\]
and $\lambda (\cdot )$ satisfies
\[
\lambda (t)=E^{\mathcal{F}_{t}}\int_t^T\left[\left(
\begin{array}{c}
B_1(s,t) \\
C_1(s,t)
\end{array}
\right)Y(s)+\left(
\begin{array}{c}
B_2(s,t)  \\
 C_2(s,t)
\end{array}
\right)Z(s,t)\right]ds,
\]
where $Y(\cdot )$ is the M-solution of BSVIE
\begin{eqnarray*}
Y(t) &=&Q(t)X(t)+S^T(t)u(t)+A_1(T,t)E^{\mathcal{F}_{t}}GX(T)+A_2(T,t)\theta (t) \\
&&\ +\int_t^T[A_1(s,t)Y(s)+A_2(s,t)Z(s,t)]ds-\int_t^TZ(t,s)dW(s),
\end{eqnarray*}
and $\theta (\cdot )$ is determined by
\[
GX(T)=EGX(T)+\int_0^T\theta (t)dW(t).
\]
Then the conclusion follows.
\end{proof}
\section{Stochastic integral games and coupled FBSIVEs}

\subsection{A sufficient condition for existence of saddle point}

In this subsection, under certain assumptions, a sufficient
condition for the existence of saddle point $\widehat{u}$ will be
given via coupled FBSVIEs. To begin with, we should investigate the
solvability the following coupled FBSVIE on $[0,T],$
\begin{equation}
\left\{
\begin{array}{lc}
X(t)=\varphi (t)+\displaystyle\int_0^t[A_1(t,s)X(s)+B_1(t,s)P(s)]ds \\
\quad \quad \quad + \displaystyle\int_0^t[A_2(t,s)X(s)+B_2(t,s)P(s)]dW(s),\\
Y(t)=\phi _1(t)X(t)+\phi _2(t)P(t)+\displaystyle\int_t^TC_1(s,t)Y(s)ds\\
\quad \quad \quad +\displaystyle\int_t^TC_2(s,t)Z(s,t)ds-\displaystyle\int_t^TZ(t,s)dW(s),\\
P(t)=\displaystyle
E^{\mathcal{F}_{t}}\int_t^T[D_1(s,t)Y(s)+D_2(s,t)Z(s,t)]ds.
\end{array}
\right.
\end{equation}
We will show that it admits a unique M-solution by which we mean
that $(X,Y,Z,P)$ satisfies the FBSVIEs in the usual sense and
moreover the following hold, $Y(t)=EY(t)+\int_0^tZ(t,s)dW(s).$ Note
that the generator in the second equation is independent of $Z(t,s)$
with $(t,s)\in\Delta^c,$ we can transform the above FBSVIE into
anther form
\begin{equation}
\left\{
\begin{array}{lc}
X(t)=\varphi
(t)+\displaystyle\int_0^tA_1(t,s)X(s)ds+\displaystyle\int_0^tA_2(t,s)X(s)dW(s)\\
\quad \quad \quad
+\displaystyle\int_0^tB_1(t,s)\left(E^{\mathcal{F}_{s}}\displaystyle
\int_s^T[D_1(u,s)Y(u)du+D_2(u,s)Z(u,s)]du\right)ds \\
\quad \quad
\quad+\displaystyle\int_0^tB_2(t,s)\left(E^{\mathcal{F}_{s}}\displaystyle
\int_s^T[D_1(u,s)Y(u)du+D_2(u,s)Z(u,s)]du\right)dW(s),\\
Y(t)=\phi
_1(t)X(t)+E^{\mathcal{F}_{t}}\displaystyle\int_t^TC'_1(s,t)Y(s)ds
+E^{\mathcal{F}_{t}}\displaystyle\int_t^TC'_2(s,t)Z(s,t)ds,
\end{array}
\right.
\end{equation}
where $C'_1(s,t)=C_1(s,t)+\phi_2(t)D_1(s,t),$ and
$C'_2(s,t)=C_2(s,t)+\phi_2(t)D_2(s,t).$ Next we turn to study FBSVIE
(27) rather than (26). Basic assumptions imposed on the parameters
in the above equation are summarized as follows,

 (H2) $A_i (B_i) :\Delta\times \Omega \mapsto R$ $(C_i, D_i :\Delta^c\times
\Omega \mapsto R, respectively )$ is $\Bbb{B}(\Delta)\otimes
\mathcal{F}_{T}$-measurable $(\Bbb{B}(\Delta^c)\otimes
\mathcal{F}_{T}-measurable, respectively )$ such that $s\rightarrow
A_i(t,s) (B_i(t,s))$ $(s\rightarrow C_i(s,t) (D_i(s,t)),
respectively)$ is $\mathcal{F}$- progressively measurable for all
$t\in[0,T]$, $(i=1,2),$ $\varphi (\cdot )\in
L_{\mathcal{F}}^2[0,T],$
 $\phi_i$ $(i=1,2)$ is deterministic function. We assume that for any
$t\geq s,$
\begin{eqnarray*}
&&|A_1(t,s)|\leq K_1(t,s), \quad |A_2(t,s)|\leq K_2(t,s), \\
 &&|B_1(t,s)|\leq e^{\beta s}K_3(t,s), \quad |B_2(t,s)|\leq e^{\beta s}K_4(t,s),
 \end{eqnarray*}
 and for any $t\leq s$
 \begin{eqnarray*}
&&|C_1(t,s)|\leq K_5(t,s),\quad |C_2(t,s)|\leq K_6(t,s), \\
&&\phi_2(t)|D_1(t,s)|\leq K_7(t,s),\quad \phi_2(t)|D_2(t,s)|\leq
K_8(t,s),
\end{eqnarray*}
where $q>2$ is a constant, $|\phi _1(t)|\leq \frac 12e^{-\beta t}$
with $\beta>1$ being a constant and
\[
M_1=\sup_{t\in [0,T]}\int_0^tK_1^2(t,s)<\infty , \quad
M_3=\sup_{t\in [0,T]}\int_0^tK_3^2(t,s)<\infty ,
\]
\[
M_2=\sup_{(t,s)\in \Delta }K_2(t,s)<\infty ,\quad M_4=\sup_{(t,s)\in
\Delta }K_4(t,s),
\]
\[
M_5=\sup_{t\in [0,T]}\int_t^TK_5^2(s,t)ds<\infty , \quad
M_6=\sup_{t\in [0,T]}\int_t^TK_6^q(s,t)ds<\infty ,
\]
\[
M_7=\sup_{t\in [0,T]}\int_t^TK_7^2(s,t)ds<\infty , \quad
M_8=\sup_{t\in [0,T]}\int_t^TK_8^q(s,t)ds<\infty .
\]
\begin{theorem}
Let (H2) hold, then FBSVIE (27) admits a unique M-solution.
\end{theorem}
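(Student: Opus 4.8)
The plan is to solve (27) by a Banach fixed-point argument in the space of adapted M-solutions, equipped with the $\beta$-weighted norm announced in the Introduction, where $\beta>1$ is the constant appearing in (H2). Let $\mathcal{M}$ be the set of triples $(X(\cdot),Y(\cdot),Z(\cdot,\cdot))\in L^2_{\mathcal{F}}[0,T]\times L^2_{\mathcal{F}}[0,T]\times L^2(0,T;L^2_{\mathcal{F}}[0,T])$ satisfying $Y(t)=EY(t)+\int_0^tZ(t,s)dW(s)$, with $\|(X,Y,Z)\|^2=E[\int_0^Te^{-\beta s}|X(s)|^2ds+\int_0^Te^{\beta s}|Y(s)|^2ds+\int_0^Te^{\beta t}\int_0^T|Z(t,s)|^2ds\,dt]$; this is a Banach space. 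I would define a map $\mathcal{T}$ on $\mathcal{M}$ by exploiting the semi-decoupled structure of (27): given an input, only its $\bar Y,\bar Z$ components are used. First form $\bar P(t)=E^{\mathcal{F}_t}\int_t^T[D_1(s,t)\bar Y(s)+D_2(s,t)\bar Z(s,t)]ds$ and solve the forward linear SVIE for the new $X$ — this is exactly the setting of Lemma 3.1, so $X=(I-\mathcal{A})^{-1}\tilde\varphi$ with $\tilde\varphi(t)=\varphi(t)+\int_0^tB_1(t,s)\bar P(s)ds+\int_0^tB_2(t,s)\bar P(s)dW(s)$. Then set $Y(t)=\phi_1(t)X(t)+E^{\mathcal{F}_t}\int_t^TC'_1(s,t)\bar Y(s)ds+E^{\mathcal{F}_t}\int_t^TC'_2(s,t)\bar Z(s,t)ds$, which is $\mathcal{F}_t$-measurable, and let $Z(t,\cdot)$ on $[0,t]$ be the integrand in the martingale representation of $Y(t)$ (on $\Delta^c$ we may take $Z\equiv0$; note only the values of $Z$ on $\Delta$ enter (27), so $\mathcal{T}$ is well defined and maps $\mathcal{M}$ into itself). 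Equivalently one may keep the $-\int_t^TZ(t,s)dW(s)$ term and invoke Proposition 2.1 for the backward step; the two formulations coincide precisely because the generator of the $Y$-equation is free of $Z(t,s)$ for $(t,s)\in\Delta^c$.

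The \emph{heart of the proof} is showing that $\mathcal{T}$ is a contraction. Taking two inputs and writing $(\delta X,\delta Y,\delta Z)$, $\delta P$ for the differences of the corresponding outputs and of $\bar P$, I would estimate $\|\mathcal{T}\text{(input)}-\mathcal{T}\text{(input)}'\|$ term by term, using stochastic Fubini to rewrite the iterated conditional-expectation integrals, It\^o's isometry for the $dW$ parts of the forward equation, conditional Jensen / Doob's inequality for the $E^{\mathcal{F}_t}$ terms, and H\"older's inequality with the exponent $q>2$ (matching $K_6,K_8$) for the terms carrying $Z(s,t)$. The exponential weights do the decisive work: as in the classical BSDE argument, factors of the form $\int_s^Te^{-\beta(t-s)}dt\le1/\beta$ appear and shrink the Volterra contributions, while the bounds $|B_i(t,s)|\le e^{\beta s}K_i(t,s)$ are tailored so that, after Fubini, the growing factor $e^{\beta s}$ cancels the decaying weight $e^{-\beta t}$ carried by $X$. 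Crucially, $|\phi_1(t)|\le\frac12e^{-\beta t}$ controls the feedback of $X$ into the $Y$-equation, since $E\int_0^Te^{\beta t}|\phi_1(t)|^2|\delta X(t)|^2dt\le\frac14E\int_0^Te^{-\beta t}|\delta X(t)|^2dt$, contributing a factor at most $\frac14$ (or $\frac12$ after $(a+b)^2\le2a^2+2b^2$). Collecting the estimates with the finiteness of $M_1,\dots,M_8$, the Lipschitz constant of $\mathcal{T}$ is strictly less than $1$, so Banach's theorem gives a unique fixed point $(X,Y,Z)$; one then checks it solves (27) (equivalently (26)) with the M-solution representation, and recovers $P$ from its defining formula.

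The step I expect to be the main obstacle is closing this loop quantitatively: verifying that the gain of $X\mapsto Y$ (bounded through the engineered $\frac12e^{-\beta t}$) composed with the gain of $(Y,Z)\mapsto X$ (bounded through the $e^{\beta s}$-weighted bounds on $B_i$ and the $1/\beta$-smallness from Fubini) stays below $1$ uniformly in the remaining constants $M_i$ — this is exactly what dictates the rather rigid form of (H2). A related technical nuisance is the asymmetry of $Z$: it enters the equations integrated over its first argument ($Z(s,t)$ with $s\in[t,T]$) while the norm weights it by its first argument, so one must order the integrations carefully and use the $L^q$, $q>2$, H\"older bounds rather than plain Cauchy--Schwarz to keep these terms controllable. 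Finally one should justify that the reduction from (26) to (27) loses nothing — that a fixed point of (27) extends to a genuine adapted M-solution of (26) — which is where the independence of the generator from $Z$ on $\Delta^c$ is again used.
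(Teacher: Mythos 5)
Your proposal is correct and follows essentially the same route as the paper: a Banach fixed-point argument in the $e^{\mp\beta t}$-weighted norm, closed by exactly the ingredients the paper uses — the bound $|\phi_1(t)|\le\tfrac12 e^{-\beta t}$, the $e^{\beta s}$-weighted bounds on $B_i$, stochastic Fubini producing the $1/\beta$ factors, the H\"older trick with $q>2$ for the $z(s,t)$ terms, and control of $Z$ through the M-solution representation. The only immaterial difference is that you solve the forward SVIE exactly via $(I-\mathcal{A})^{-1}$ at each step and iterate only in $(Y,Z)$, whereas the paper's map $\Theta$ plugs the previous iterate $(x,y,z)$ into the $A_i$-terms as well and contracts in all three components simultaneously.
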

\begin{proof}Let $\mathcal{M}^2[0,T]$ be the set of element $(Y,Z)$ in $\mathcal{H}^2[0,T]$ such
that $$Y(t)=EY(t)+\int_0^tZ(t,s)dW(s),\quad (t,s)\in\Delta.$$ It is
easy to see that $\mathcal{M}^2[0,T]$ is a closed subspace of
$\mathcal{H}^2[0,T],$ see \cite{Y2}. We consider the following
equation
\begin{equation}
\left\{
\begin{array}{lc}
X(t)=\varphi
(t)+\displaystyle\int_0^tA_1(t,s)x(s)ds+\displaystyle\int_0^tA_2(t,s)x(s)dW(s)\\
\quad \quad
+\displaystyle\int_0^tB_1(t,s)\left(E^{\mathcal{F}_{s}}\displaystyle
\int_s^T[D_1(u,s)y(u)du+D_2(u,s)z(u,s)]du\right)ds \\
\quad
\quad+\displaystyle\int_0^tB_2(t,s)\left(E^{\mathcal{F}_{s}}\displaystyle
\int_s^T[D_1(u,s)y(u)du+D_2(u,s)z(u,s)]du\right)dW(s),\\
Y(t)=\phi
_1(t)x(t)+E^{\mathcal{F}_{t}}\displaystyle\int_t^TC'_1(s,t)y(s)ds
+E^{\mathcal{F}_{t}}\displaystyle\int_t^TC'_2(s,t)z(s,t)ds,
\end{array}
\right.
\end{equation}
for any $\varphi (\cdot )\in L_{\mathcal{F}}^2[0,T],$ and $(x(\cdot
),y(\cdot ),z(\cdot ,\cdot ))\in L_{\mathcal{F}%
}^2[0,T]\times \mathcal{M}^2[0,T]%
.$ Obviously (28) admits a unique adapted M-solution $(X(\cdot
),Y(\cdot
),Z(\cdot ,\cdot ))\in L_{\mathcal{F}%
}^2[0,T]\times \mathcal{M}^2[0,T],$ and we can define a
map $\Theta :L_{\mathcal{F}}^2[0,T]\times \mathcal{M}^2[0,T]%
\rightarrow L_{\mathcal{F}}^2[0,T]\times \mathcal{M}^2[0,T]$ by
\begin{eqnarray*}
&&\Theta (x(\cdot ),y(\cdot ),z(\cdot ,\cdot )) =(X(\cdot ),Y(\cdot
),Z(\cdot ,\cdot )),
\nonumber \\
&&\forall (x(\cdot ),y(\cdot ),z(\cdot ,\cdot )) \in
L_{\mathcal{F}}^2[0,T]\times \mathcal{M}^2[0,T].
\end{eqnarray*}
Let $(\overline{x}(\cdot ),\overline{y}(\cdot ),\overline{z}(\cdot ,\cdot )%
)\in L_{\mathcal{F}%
}^2[0,T]\times \mathcal{M}^2[0,T]$ and $$\Theta (%
\overline{x}(\cdot ),\overline{y}(\cdot ),\overline{z}(\cdot ,\cdot )%
)=(\overline{X}(\cdot ),%
\overline{Y}(\cdot ),\overline{Z}(\cdot ,\cdot )%
).$$ As to the first forward equation in (28),
\begin{eqnarray}
&&E\int_0^Te^{-\beta t}|X(t)-\overline{X}(t)|^2dt \nonumber \\
&\leq &2E\int_0^Te^{-\beta t}\left| \int_0^tA_1(t,s)[x(s)-\overline{x}%
(s)]+B_1(t,s)[p(s)-\overline{p}(s)]ds\right| ^2dt \nonumber \\
&&+4E\int_0^Te^{-\beta t}\left( \int_0^tA_2^2(t,s)[x(s)-\overline{x}%
(s)]^2ds+\int_0^tB_2^2(t,s)[p(s)-\overline{p}(s)]^2ds\right) dt \nonumber \\
&\leq &4(M_1+M_2)E\int_0^T|x(s)-\overline{x}(s)|^2\int_s^Te^{-\beta t}dt \nonumber\\
&&+4(M_3+M_4)E\int_0^T|p(s)-\overline{p}(s)|^2e^{2\beta
s}\int_s^Te^{-\beta
t}dt \nonumber \\
&\leq &\frac C\beta E\int_0^T|x(s)-\overline{x}(s)|^2e^{-\beta
s}ds+\frac C\beta E\int_0^T|p(s)-\overline{p}(s)|^2e^{\beta s}ds,
\nonumber
\end{eqnarray}
where we denote
$$p(s)-\overline{p}(s)=E^{\mathcal{F}_{s}}\displaystyle
\int_s^TD_1(u,s)[y(u)-\overline{y}(u)]du+E^{\mathcal{F}_{s}}\displaystyle
\int_s^TD_2(u,s)[z(u,s)-\overline{z}(u,s)]du.$$ Obviously we have
\begin{eqnarray}
&&E\int_0^Te^{\beta t}|p(t)-\overline{p}(t)|^2dt
 \nonumber\\
&\leq &2M_7E\int_0^Te^{\beta t}\int_t^T|y(s)-\overline{y}(s)|^2dsdt+2M_8E%
\int_0^Te^{\beta t}\int_t^T|z(s,t)-\overline{z}(s,t)|^2dsdt \nonumber \\
&\leq &\frac{2M_7}\beta E\int_0^Te^{\beta
s}|y(s)-\overline{y}(s)|^2ds+2M_8E\int_0^Te^{\beta
s}|y(s)-\overline{y}(s)|^2ds, \nonumber
\end{eqnarray}
consequently
\begin{eqnarray}
&&E\int_0^Te^{-\beta t}|X(t)-\overline{X}(t)|^2dt \nonumber\\
&\leq& \frac C\beta E\int_0^T|x(s)-\overline{x}(s)|^2e^{-\beta
s}ds+\frac C\beta E\int_0^T|y(s)-\overline{y}(s)|^2e^{\beta s}ds,
\end{eqnarray}
where $C$ depends on $M_i$, $i=1,2,3,4,7,8.$ As to the other one in
(28), for some $p\in(1,2),$ and $\frac{1}p+\frac {1}q=1,$
\begin{eqnarray}
&&E\int_0^Te^{\beta t}|Y(t)-\overline{Y}(t)|^2dt \nonumber \\
&\leq &2E\int_0^Te^{\beta t}\phi _1^2(t)|x(t)-\overline{x}%
(t)|^2dt+
C(M_5+M_7)E\int_0^Te^{\beta t}\int_t^T|y(s)-\overline{y}(s)|^2dsdt \nonumber \\
&&+C(M_6+M_8)E%
\int_0^Te^{\beta t}\left(\int_t^T|z(s,t)-\overline{z}(s,t)|^pds\right)^{\frac 2 p}dt  \nonumber\\
&\leq &\frac 12E\int_0^Te^{-\beta t}|x(t)-\overline{x}(t)|^2dt
+\frac{C}\beta E\int_0^Te^{\beta s}|y(s)-\overline{y}(s)|^2ds\nonumber \\
&&+ C\left[\frac1 \beta\right]^{\frac {2-p} {p}}
E\int_0^Tdt\int_t^Te^{\beta s}|z(s,t)-\overline{z}(s,t)|^2ds \nonumber \\
&\leq & C\left(\left[\frac1 \beta\right]^{\frac {2-p} {p}}+\frac {1
}\beta \right)E\int_0^Te^{\beta s}|y(s)-\overline{y}(s)|^2ds+\frac
12E\int_0^Te^{-\beta t}|x(t)-\overline{x}(t)|^2dt,
\end{eqnarray}
where $C$ depends on $M_i$, $i=5,6,7,8.$ Note that here we use the
following fact, for $1<p<2,$ and $r>0,$
\begin{eqnarray}
&&\left[ \int_t^T|z(s,t)-\overline{z}(s,t)|^{p}ds\right] ^{\frac 2
p} \nonumber \\
&\leq& \left[ \int_t^Te^{-rs\frac 2{2-p}}ds\right] ^{\frac{2-p}%
{p}}\int_t^Te^{rs\frac 2 p}|z(s,t)-\overline{z}(s,t)|^2ds \nonumber \\
&\leq &\left[ \frac 1 r\right] ^{\frac{2-p}{p}}\left[ \frac{2-p}2\right] ^{%
\frac{2-p}{p}}e^{-rt\frac 2 p}\int_t^Te^{rs\frac 2
p}|z(s,t)-\overline{z}(s,t)|^2ds.\nonumber
\end{eqnarray}
By (29) and (30) we obtain
\begin{eqnarray*}
&&E\int_0^Te^{-\beta t}|X(t)-\overline{X}(t)|^2dt+E\int_0^Te^{\beta t}|Y(t)-%
\overline{Y}(t)|^2dt \\
&\leq &\frac C\beta E\int_0^T|x(s)-\overline{x}(s)|^2e^{-\beta
s}ds+\frac 12E\int_0^Te^{-\beta t}|x(t)-\overline{x}(t)|^2dt\\
&&+C\left(\left[\frac1 \beta\right]^{\frac {2-p} {p}}+\frac {1
}\beta \right)E\int_0^Te^{\beta s}|y(s)-\overline{y}(s)|^2ds,
\end{eqnarray*}
where $C$ depends on $M_i(i=1,2\cdots 8).$ So we can choose a
suitable $\beta $ such that the mapping $\Theta$ is contracted and
the result holds naturally.
\end{proof}

Suppose $G=0,$ $R_{11}>0$ and $R_{22}<0,$ then $R^{-1}$ exists which
is expressed by
$$R^{-1}(t)=\left[\begin{array}{cc}
 A^{-1}(t)&  -A^{-1}(t)R_{12}(t)R^{-1}_{22}(t)\\
-B^{-1}(t)R_{21}(t)R^{-1}_{11}(t) & B^{-1}(t)
\end{array}
\right], $$ where $A(t)=R_{11}(t)-R_{12}(t)R_{22}(t)R_{21}(t),\quad
B(t)=R_{22}(t)-R_{21}(t)R^{-1}_{11}(t)R_{12}(t).$ Moreover we assume
$A^{-1}(t)$, $B^{-1}(t)$, $R_{11}(t)$ and $R_{22}(t)$ are bounded,
thus $R(t)^{-1}$ is uniformly bounded, then (23) can be rewritten as
$u(t)=-R(t)^{-1}[S(t)X(t)+\lambda(t)]$ with $t\in[0,T].$ After
substituting $u(t)$ into (3), (23) and (24), we obtain the
following:
\begin{equation}
\left\{
\begin{array}{lc}
X(t)=\varphi (t)+\displaystyle\int_0^t[(A_1(t,s)-U_1(t,s)R^{-1}(s)S(s))X(s)-U_1(t,s)R^{-1}(s)\lambda(s)]ds \\
\quad \quad \quad + \displaystyle\int_0^t[(A_2(t,s)-U_2(t,s)R^{-1}(s)S(s))X(s)-U_2(t,s)R^{-1}(s)\lambda(s)]dW(s),\\
Y(t)=[Q(t)-S^{T}(t)R^{-1}(t)S(t)]X(t)-S^{T}(t)R^{-1}(t)\lambda(t)+\displaystyle\int_t^T A_1(s,t)Y(s)ds\\
\quad \quad \quad +\displaystyle\int_t^TA_2(s,t)Z(s,t)ds-\displaystyle\int_t^TZ(t,s)dW(s),\\
\lambda
(t)=E^{\mathcal{F}_{t}}\displaystyle\int_t^T\left[U_1^T(s,t)Y(s)+U_2^T(s,t)Z(s,t)\right]ds,
\end{array}
\right.
\end{equation}
where $U_i(t,s)=\left( B_i(t,s), C_i(t,s) \right)$ $(i=1,2).$

The preceding theorem implies that if $[Q(t)-S^{T}(t)R^{-1}(t)S(t)]$
satisfies certain condition, then (31) admits a unique M-solution
$(X,Y,Z,\lambda),$ thereby the following result is straightforward.
\begin{theorem}
Let (H1) hold, $[Q(t)-S^{T}(t)R^{-1}(t)S(t)]<\frac 1 2 e^{-\beta
t}$, where $\beta$ is a constant depending on the upper boundary of
the coefficients in the game problem, moreover, $R^{-1}(t)$ is
bounded, then (31) admits a unique M-solution $(X,Y,Z,\lambda).$
Furthermore, if (20) and (21) hold, then the quadratic integral game
admits an open-loop saddle point $\widehat{u},$ and it admits a
representation, $u(t)=-R(t)^{-1}[S(t)X(t)+\lambda(t)]$.
\end{theorem}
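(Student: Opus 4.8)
The plan is to derive the statement by combining Theorem 5.1 with Theorems 3.1, 4.1 and 4.2. First I would record the simplifications forced by the standing hypotheses $G=0$, $R_{11}>0$, $R_{22}<0$ with $R^{-1}$ bounded: since $G=0$ we have $GX(T)=0$, hence $\theta\equiv 0$, so the terms $\Xi_1(t)GX(T)$ and $\Xi_2(t)\theta(t)$ in (25) drop out and (25) reads $\lambda(t)+(SX)(t)+(R\widehat u)(t)=0$, equivalently $\widehat u(t)=-R(t)^{-1}[S(t)X(t)+\lambda(t)]$. Substituting this candidate control into the state equation (3), into the BSVIE (24) (with $G=0$), and into the defining relation (23) for $\lambda$, and using $U_i(t,s)=(B_i(t,s),C_i(t,s))$, one obtains precisely the coupled FBSVIE (31); this is exactly the computation sketched just before the theorem. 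Thus the first assertion reduces to showing that (31) admits a unique M-solution.

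Second, I would check that (31) falls within the scope of Theorem 5.1. Concretely, (31) is the system (26) (in its vector-valued version, with $X$ scalar and $\lambda$ taking values in $R^2$, which leaves the fixed-point argument of Theorem 5.1 unchanged) with $\phi_1(t)=Q(t)-S^T(t)R^{-1}(t)S(t)$, $\phi_2(t)=-S^T(t)R^{-1}(t)$, forward kernels $A_1-U_1R^{-1}S$, $A_2-U_2R^{-1}S$, $-U_1R^{-1}$, $-U_2R^{-1}$, and backward kernels $A_1(s,t),A_2(s,t)$ in the role of $C_1,C_2$ and $U_1^T(s,t),U_2^T(s,t)$ in the role of $D_1,D_2$; the transformation of (26) into (27) applies since the drift of the $Y$-equation in (31) depends on $Z$ only through $Z(s,t)$ with $s\in[t,T]$. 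I would then verify (H2): by (H1) the kernels $A_i,B_i,C_i$ are bounded and progressively measurable, hence so are all the composite kernels above (boundedness of $R^{-1}$ is used here), which yields kernel bounds of the required form with $\beta$ taken large enough in terms of the uniform constants $M_i$, exactly as in the proof of Theorem 5.1; and the smallness condition $|\phi_1(t)|\leq\frac12 e^{-\beta t}$ of (H2) is precisely the hypothesis $Q(t)-S^T(t)R^{-1}(t)S(t)<\frac12 e^{-\beta t}$ assumed here. Theorem 5.1 then gives a unique M-solution $(X,Y,Z,\lambda)$ of (31).

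Third, set $\widehat u(t)=-R(t)^{-1}[S(t)X(t)+\lambda(t)]$ with this M-solution. By construction $(X,Y,Z,\lambda)$ solves (3), (24) and (23) with $u=\widehat u$, and since $G=0$ this means that (25) holds for $\widehat u$; by Theorem 4.2 this is exactly the condition $\Theta_1\varphi\in\mathcal R(\Theta)$. On the other hand, hypotheses (20) and (21) are, by Theorem 4.1, equivalent to $\Theta_{11}\geq 0$ and $\Theta_{22}\leq 0$. Hence all three conditions of Theorem 3.1 are met, so the game has an open-loop saddle point; moreover, by Theorem 3.1 every saddle point $u$ solves $\Theta u+\Theta_1\varphi=0$, which by Theorem 4.2 (with $G=0$) reads $\lambda+SX+Ru=0$, i.e. $u(t)=-R(t)^{-1}[S(t)X(t)+\lambda(t)]$ with $(X,Y,Z,\lambda)$ the unique M-solution of (31). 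This is the claimed representation.

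The only genuinely new content lies in the second step: the main obstacle is to confirm carefully that (31) is an instance of (27) — matching the vector-valued system (31) against the scalar framework of (26)–(27), and checking that the composite coefficients satisfy (H2), in particular that the assumed bound on $Q-S^TR^{-1}S$ is exactly the $\phi_1$-smallness that makes the fixed-point map a contraction. Once this identification is in place, the saddle-point conclusion and the representation follow by a direct assembly of Theorems 3.1, 4.1 and 4.2, with the uniqueness of the M-solution of (31) forcing the control to be given by the stated formula.
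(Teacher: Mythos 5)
Your proposal is correct and follows exactly the route the paper intends: the paper gives no explicit proof (it calls the result ``straightforward'' after the substitution producing (31) and Theorem 5.1), and your three steps --- deriving (31) by substituting $\widehat u=-R^{-1}[SX+\lambda]$ into (3), (23), (24) with $G=0$, identifying (31) as an instance of (26)--(27) so that Theorem 5.1 applies with $\phi_1=Q-S^{T}R^{-1}S$ and $\phi_2=-S^{T}R^{-1}$, and then assembling Theorems 3.1, 4.1 and 4.2 to get the saddle point and the representation --- are precisely that argument, spelled out in more detail than the paper provides. The only caveats (the one-sided bound on $Q-S^{T}R^{-1}S$ versus the absolute-value bound $|\phi_1(t)|\le \tfrac12 e^{-\beta t}$ in (H2), and (H2)'s deterministic dominating kernels versus the merely (H1)-type random coefficients) are imprecisions inherited from the paper's own statement, not gaps introduced by your proof.
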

\subsection{Some furthermore considerations on stochastic integral games}
In this subsection, we would like to give some furthermore
considerations along the routine above. As to the case of $G\neq 0,$
if we define $u(t)$ as $u(t)=-R^{-1}(t)\lambda(t)$, then (3), (23)
and (24) can be rewritten as
\begin{equation}
\left\{ \begin{array}{lc}
 X(t)=\varphi
(t)-\displaystyle\int_0^tU_1(t,s)R^{-1}(s)\lambda(s)ds
-\displaystyle\int_0^tU_2(t,s)R^{-1}(s)\lambda(s)dW(s)\\
\quad \quad \quad
+\displaystyle\int_0^tA_1(t,s)X(s)ds+\displaystyle\int_0^tA_2(t,s)X(s)dW(s),\\
Y(t)=Q(t)X(t)-S(t)^TR^{-1}(t)\lambda(t)+A_1^{T}(T,t)GX(T)+\displaystyle\int_t^T
A_1(s,t)Y(s)ds\\
\quad \quad \quad +A_2^{T}(T,t)\theta(t)+\displaystyle\int_t^TA_2(s,t)Z(s,t)ds-\displaystyle\int_t^TZ(t,s)dW(s),\\
\lambda
(t)=E^{\mathcal{F}_{t}}[U_1^{T}(T,t)GX(T)+U_2^{T}(T,t)\theta(t)]
+S(t)X(t)\\
\quad \quad \quad
+E^{\mathcal{F}_{t}}\displaystyle\int_t^T\left[U_1^T(s,t)Y(s)+U_2^T(s,t)Z(s,t)\right]ds,
\end{array}
\right.
\end{equation}
where $GX(T)=EGX(T)+\int_0^T\theta(s)dW(s).$ Obviously (32) is
coupled FBSVIE. In some special case, for example, $S(t)=0,$ $R$ is
uniform positive, $Q$ and $G$ are non-negative, then the above
FBSVIE (32) admits a unique M-solutions, see p.75 in \cite{Y2}.
However, as to the general case, the solvability problem is still a
question for us to endeavor to overcome. One main technical obstacle
is how to deal with the appearance of $GX(T)$ in the second
equation, nonetheless, it is just the reason, we believe, that the
problem has certain relations with the solvability for some
stochastic Fredholm-Volterra
 integral equation. To get some feeling about this, Let us consider
 a special case below.
 We assume that all the coefficients aforementioned are deterministic,
  $A_i(t,s)=0,$ $(i=1,2)$, $\varphi (\cdot)=\varphi_1(t)+\int_0^tl(t,s)dW(s),$
  $\varphi_1$ and $l$ are deterministic functions.
In such special setting, $Y(t)=Q(t)X(t)-S^T(t)R^{-1}(t)\lambda(t),$
$t\in[0,T],$ and $Z(t,s)=0,$ $0\leq t\leq s\leq T.$
Due to the martingale representation theorem, there must exists a
unique process $\pi,$ such that
\begin{eqnarray}
\lambda(t)=E\lambda(t)+\int_0^t \pi(t,s)dW(s),\quad t\in[0,T],\\
X(t)=EX(t)+\int_0^t K(t,s)dW(s),\quad t\in[0,T],
\end{eqnarray}
thus we can express $Z(t,s),$ $(t,s)\in \Delta$ by
\begin{eqnarray*}
Z(t,s)=Q(t)K(t,s)-S^T(t)R^{-1}(t)\pi(t,s).
\end{eqnarray*}
On the other hand,
\begin{eqnarray}
&&GX(T)=G\varphi
(T)-\int_0^TGU_1(T,s)R^{-1}(s)\lambda(s)ds
-\int_0^TGU_2(T,s)R^{-1}(s)\lambda(s)dW(s),\nonumber \\
\end{eqnarray}
 then substitute (35) into $GX(T)=EGX(T)+\int_0^T\theta(s)dW(s)$ and by stochastic Fubini theorem, we have
$$\theta(s)=-\int_s^TGU_1(T,u)R^{-1}(u)\pi(u,s)du-GU_2(T,s)R^{-1}(s)\lambda(s)+l(T,s).$$
Similarly we get
$$K(t,s)=-\int_s^tU_1(t,u)R^{-1}(u)\pi(u,s)du-U_2(t,s)R^{-1}(s)\lambda(s)+l(t,s).$$
Then put the expression of $X$ and $Y$ into the third equation of
(32) we have
\begin{eqnarray}
\lambda (t)&=&\Sigma_1(t)+E^{\mathcal{F}_{t}}\int_0^T\Sigma_2(t,s)\lambda(s)ds+E^{\mathcal{F}_{t}}\int_0^T\Sigma_3(t,s)\lambda(s)dW(s)\nonumber\\
&&+E^{\mathcal{F}_{t}}\int_t^T\Sigma_4(t,s)\pi(s,t)ds+E^{\mathcal{F}_{t}}\int_t^T\Sigma_5(t,s)\lambda(s)ds \nonumber\\
&&+E^{\mathcal{F}_{t}}\int_t^T\Sigma_6(t,s)\lambda(s)dW(s)+\Sigma_7(t)\lambda(t) \nonumber\\
&&+E^{\mathcal{F}_{t}}\int_0^t\Sigma_8(t,s)\lambda(s)dW(s)+E^{\mathcal{F}_{t}}\int_0^t
\Sigma_9(t,s)\lambda(s)ds \nonumber\\
&=&\Sigma_1(t)+E^{\mathcal{F}_{t}}\int_0^T\Sigma_2'(t,s)\lambda(s)ds+E^{\mathcal{F}_{t}}\int_0^T\Sigma_3'(t,s)\lambda(s)dW(s)\nonumber\\
&&+E^{\mathcal{F}_{t}}\int_t^T\Sigma_4(t,s)\pi(s,t)ds+E^{\mathcal{F}_{t}}\int_t^T\Sigma_5'(t,s)\lambda(s)ds \nonumber\\
&&+E^{\mathcal{F}_{t}}\int_t^T\Sigma_6'(t,s)\lambda(s)dW(s)+\Sigma_7(t)\lambda(t),
\end{eqnarray}
where
\begin{eqnarray*}
&&\Sigma_1(t)=U_1^{T}(T,t)G\varphi(T)+\int_t^TU_1^{T}(s,t)Q(s)\varphi(s)ds+S(t)\varphi(t)\\
&&+E^{\mathcal{F}_{t}}\int_t^T\left[U_1^T(s,t)Q(s)\int_0^tl(s,u)dW(u)ds+U^T_2(s,t)Q(s)l(s,t)\right]ds\\
 &&\Sigma_2(t,s)=-GU_1^{T}(T,t)U_1(T,s)R^{-1}(s);
\Sigma_3(t,s)=-GU_1^{T}(T,t)U_2(T,s)R^{-1}(s);\\
&&\Sigma_4(t,s)=-U_2^{T}(T,t)GU_1(T,s)R^{-1}(s)-U_2^T(s,t)S^T(s)R^{-1}(s)\\
&&-\int_s^TU_2^{T}(u,t)Q(u)U_1(u,s)duR^{-1}(s);\\
&&\Sigma_5(t,s)=-\int_s^TU_1^T(u,t)Q(u)U_1(u,s)duR^{-1}(s)-U_1^T(s,t)S(s)R^{-1}(s);\\
&&\Sigma_6(t,s)=-\int_s^TU_1^T(u,t)Q(u)U_2(u,s)duR^{-1}(s);\\
&&\Sigma_7(t,s)=-U_2^T(T,t)GU_2(T,t)R^{-1}(t)-\int_t^TU_2^T(s,t)Q(s)U_2(s,t)dsR^{-1}(t);\\
&&\Sigma_8(t,s)=-\int_t^TU_1^T(u,t)Q(u)U_2(u,s)duR^{-1}(s)-S(t)U_2(t,s)R^{-1}(s);\\
&&\Sigma_9(t,s)=-\int_t^TU_1^T(u,t)Q(u)U_1(u,s)duR^{-1}(s)-S(t)U_1(t,s)R^{-1}(s);\\
&&\Sigma_2'(t,s)=\Sigma_2(t,s)+\Sigma_9(t,s);\quad
\Sigma_3'(t,s)=\Sigma_3(t,s)+\Sigma_8(t,s);\\
&&\Sigma_5'(t,s)=\Sigma_5(t,s)-\Sigma_9(t,s);\quad
\Sigma_6'(t,s)=\Sigma_6(t,s)-\Sigma_8(t,s).
\end{eqnarray*}
We can denote (36) as a linear BSFVIE. To sum up,
\begin{theorem}
Let all the coefficients are deterministic, $A_i(t,s)=0,$ $i=1,2$,
$\varphi(t)=\varphi_1(t)+\int_0^tl(t,s)dW(s),$ $\varphi_1$ and $l$
are deterministic functions, $R^{-1}(t)$ exists and bounded. If (36)
admits a solution $\lambda,$ furthermore, we assume (20) and (21)
hold, then the quadratic integral game admits an open-loop saddle
point $\widehat{u},$ and it admits a representation
$\widehat{u}(t)=-R^{-1}(t)\lambda(t)$.
\end{theorem}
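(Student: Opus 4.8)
The plan is to obtain the conclusion from the necessary and sufficient condition of Theorem 3.1 after rewriting its range requirement through the equivalences established in Sections 4 and 5.1. By Theorem 3.1 the open-loop game has a saddle point $\widehat u$ exactly when $\Theta_{11}\geq 0$, $\Theta_{22}\leq 0$ and $\Theta_1\varphi\in\mathcal R(\Theta)$, in which case any saddle point solves $\Theta\widehat u+\Theta_1\varphi=0$. By Theorem 4.1 the two definiteness conditions $\Theta_{11}\geq 0$ and $\Theta_{22}\leq 0$ are precisely the standing hypotheses (20) and (21) (equivalently, the convexity/concavity recorded after Theorem 3.1), so the whole proof reduces to checking that $\Theta_1\varphi\in\mathcal R(\Theta)$.

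For the latter I would use Theorem 4.2: $\Theta_1\varphi\in\mathcal R(\Theta)$ holds if and only if there is a control $\widehat u$ satisfying the optimality relation (25). Writing $\lambda$ for $\lambda^{\mathrm{old}}+SX+\Xi_1E^{\mathcal F_t}GX(T)+\Xi_2\theta$ as in Section 5.2 and using that $R(t)^{-1}$ exists and is bounded, (25) is exactly $\widehat u(t)=-R^{-1}(t)\lambda(t)$, and substituting this back into (3), (23) and (24) collapses the coupled system into the FBSVIE (32). Hence it suffices to show that, under the present assumptions, (32) is solvable, and this is where the hypothesis that (36) has a solution is used. Since all coefficients are deterministic and $A_1\equiv A_2\equiv 0$, the second equation of (32) degenerates to $Y(t)=Q(t)X(t)-S^T(t)R^{-1}(t)\lambda(t)$ with $Z(t,s)=0$ for $t\leq s$; introducing the martingale integrands $\pi$ and $K$ of $\lambda$ and $X$ as in (33)--(34), one gets $Z(t,s)=Q(t)K(t,s)-S^T(t)R^{-1}(t)\pi(t,s)$ on $\Delta$, and the stochastic Fubini theorem applied to the forward dynamics and to $GX(T)=EGX(T)+\int_0^T\theta(s)\,dW(s)$ (cf.\ (35)) expresses $K$ and then $\theta$ as explicit affine functionals of $\lambda$ and $\pi$. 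Feeding all of this into the third equation of (32) reproduces the linear BSFVIE (36); this is the computation already carried out in Section 5.2.

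The step I expect to be the real obstacle is the \emph{converse}: recovering from a solution $\lambda$ of (36) (and its integrand $\pi$) an honest M-solution $(X,Y,Z,\theta,\lambda)$ of (32). I would define $X$ to be the unique adapted solution of the (now explicit, linear) forward equation of (32) with $u=-R^{-1}\lambda$ inserted --- unique solvability being the content of Lemma 3.1 in this degenerate case --- and then define $Y$, $Z$ and $\theta$ by the formulas above; the remaining equation of (32) then coincides with (36) and so holds by hypothesis. The delicate verifications are that the martingale integrand of this $X$ really equals the $K$ appearing in the formulas (so the loop closes), that $\theta$ really is the representation integrand of $GX(T)$, that $(Y,Z)\in\mathcal M^2[0,T]$, and that $X,Y,Z,\theta$ and $\widehat u$ lie in the appropriate $L^2$ spaces, for which one uses the boundedness of $R^{-1}$, the regularity of the coefficients, and the integrability of the given $\lambda$. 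With this established, $\widehat u:=-R^{-1}\lambda\in L^2_{\mathcal F}[0,T]\times L^2_{\mathcal F}[0,T]$ satisfies $\Theta\widehat u+\Theta_1\varphi=0$, hence $\Theta_1\varphi=-\Theta\widehat u\in\mathcal R(\Theta)$; together with (20) and (21), Theorem 3.1 then yields the existence of the open-loop saddle point and the representation $\widehat u(t)=-R^{-1}(t)\lambda(t)$.
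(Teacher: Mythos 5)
Your proposal is correct and follows essentially the same route as the paper: invoke Theorem 3.1, use Theorem 4.1 to identify (20)--(21) with $\Theta_{11}\geq 0$, $\Theta_{22}\leq 0$, use Theorem 4.2 to reduce $\Theta_1\varphi\in\mathcal R(\Theta)$ to the relation (25), i.e.\ $\widehat u=-R^{-1}\lambda$ with the coupled system (32), and then exploit the Section 5.2 computation reducing (32) to the BSFVIE (36) under the deterministic, $A_i\equiv 0$ assumptions. In fact you make explicit the converse reconstruction of $(X,Y,Z,\theta)$ from a solution $\lambda$ of (36), a verification the paper only leaves implicit in its ``to sum up'' statement.
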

More specially, suppose that $U_2=0,$ then the preceding BSFVIE (36)
becomes
\begin{eqnarray}
\lambda (t)
&=&\Sigma_1(t)+E^{\mathcal{F}_{t}}\int_0^T\Sigma_2''(t,s)\lambda(s)ds+E^{\mathcal{F}_{t}}\int_0^t
\Sigma_9''(t,s)\lambda(s)ds,
\end{eqnarray}
where $\Sigma_2''(t,s)=\Sigma_2(t,s)+\Sigma_5(t,s)$ and
$\Sigma_9''(t,s)=\Sigma_9(t,s)-\Sigma_5(t,s).$ Equation (37) is a
forward stochastic Fredholm-Volterra integral equation (SFVIE for
short), thereby under some assumptions the above SFVIE admits a
unique solution $\lambda,$ see \cite{CW}, \cite{S} and the reference
cited therein. Next we will present a example to show the
application of the above results.

Let us consider a stochastic delay equation of the form
\begin{eqnarray}
dX(t)&=&\left[A_1'(t)X(t)+A_2'(t)X(t-h)+\int_{t-h}^tA_0'(t,s)X(s)ds+B_1'(t)u_1(t)
\right. \nonumber \\
&&\left.+B_2'(t)u_1(t-h)+C_1'(t)u_2(t)+C_2'(t)u_2(t-h)\right]+D'(t)dW(t),
\end{eqnarray}
with $t\in[0,T]$ where $X(t)=k(t)$ with $t\in[-h,0],$ $A_j'$,
$B_i'$, $C_i'$, $D'$ and $k$ are bounded deterministic functions,
$(i=1,2,j=0,1,2),$ $B_2'\equiv0,$ $C_2'\equiv0$ for $t<h$, the delay
$h>0.$ Notice that when $A_0'\equiv 0,$ $D'\equiv0$ and we consider
the system in a deterministic setting, then (38) will degenerate
into the one in Section 7 of \cite{Y3}. It was shown in \cite{L2},
see also \cite{OZ}, that this type of delay equation can be written
in the following equivalent form:
\begin{eqnarray}
X(t)=X_0(t)+\int_0^t[K_1(t,s)u_1(s)+K_2(t,s)u_2(s)]ds+\int_0^t\Phi(t,s)D'(s)dW(s)
\end{eqnarray}
where
$$X_0(t)=\Phi(t,0)k(0)+\int_{-h}^0\left[\Phi(t,s+h)A_2'(s+h)+\int_0^h\Phi(t,u)A_0'(u,s)du\right]k(s)ds
,$$ $K_1(t,s)=\Phi(t,s)B_1'(s)+\Phi(t,s+h)B_2'(s+h),$
$K_2(t,s)=\Phi(t,s)C_1'(s)+\Phi(t,s+h)C_2'(s+h),$ and $\Phi$ is the
transition function:
\begin{eqnarray*}
\frac{\partial\Phi}{\partial
t}(t,s)=A_1'(t)\Phi(t,s)+A_2'(t)\Phi(t-h,s)+\int_{t-h}^tA_0'(t,u)\Phi(u,s)du
\end{eqnarray*}
with $t\in[0,T]$, $\Phi(s,s)=1$ and $\Phi(t,s)=0$ with $t<0.$
Obviously (39) is a simple form of the forward equation in (32). In
this case, $\varphi_1(t)=X_0(t)$, $l(t,s)=\Phi(t,s)D'(s),$
$U_1=(K_1, K_2),$ then we get
\begin{theorem}
Let the dynamic system is described by a stochastic delay equation
(38), and the cost functional is defined by (4), $R^{-1}$ is
bounded. If the SFVIE (37) admits a solution, furthermore, (20) and
(21) hold, then the quadratic integral game admits an open-loop
saddle point $\widehat{u},$ and it admits a representation
$\widehat{u}(t)=-R^{-1}(t)\lambda(t)$.
\end{theorem}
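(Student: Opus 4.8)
The plan is to recognize the delay game given by (38) and (4) as a particular instance of the setting treated in Theorem~5.3, so that the assertion becomes a direct corollary of that theorem in the $U_2=0$ specialization which led to the SFVIE (37). Concretely, I would (i) rewrite the delay dynamics (38) in the Volterra form (39), (ii) match its data with those of the state equation (3) and verify that $U_2=0$, and (iii) quote Theorem~5.3 together with the standing hypotheses (20)--(21).

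For steps (i) and (ii): by the lifting result of \cite{L2} (see also \cite{OZ}), since the coefficients $A_j',B_i',C_i',D'$ and the initial path $k$ in (38) are bounded and deterministic, the transition function $\Phi$ solving $\frac{\partial\Phi}{\partial t}(t,s)=A_1'(t)\Phi(t,s)+A_2'(t)\Phi(t-h,s)+\int_{t-h}^tA_0'(t,u)\Phi(u,s)\,du$ with $\Phi(s,s)=1$ and $\Phi(t,s)=0$ for $t<0$ is well defined and bounded on $[0,T]^2$. Hence $X_0(\cdot)$ is a bounded deterministic function and the kernels $K_1(t,s)=\Phi(t,s)B_1'(s)+\Phi(t,s+h)B_2'(s+h)$, $K_2(t,s)=\Phi(t,s)C_1'(s)+\Phi(t,s+h)C_2'(s+h)$ are bounded deterministic functions, so (39) is a genuine controlled SVIE of the form (3) satisfying (H1). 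Reading off the data, in (39) there is no term involving $\int_0^tA_i(t,s)X(s)$ and no control enters the diffusion, whence $A_1\equiv A_2\equiv0$, $B_2\equiv C_2\equiv0$, $B_1(t,s)=K_1(t,s)$, $C_1(t,s)=K_2(t,s)$, and the uncontrolled part $\varphi(t)=X_0(t)+\int_0^t\Phi(t,s)D'(s)\,dW(s)$ is exactly of the form $\varphi_1(t)+\int_0^tl(t,s)\,dW(s)$ with $\varphi_1=X_0$ and $l(t,s)=\Phi(t,s)D'(s)$ both deterministic. In the notation of Section~5.2 this means $U_1=(K_1,K_2)$ and $U_2=(B_2,C_2)=0$.

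With this identification every hypothesis of Theorem~5.3 holds --- the coefficients are deterministic, $A_i\equiv0$, $\varphi$ has the required structure, and $R^{-1}$ is bounded by assumption --- and, because $U_2=0$, the backward Fredholm--Volterra equation (36) degenerates precisely into the forward SFVIE (37). Thus, once (37) is assumed to possess a solution $\lambda$, Theorem~5.3 applies: the induced quadruple $(X,Y,Z,\lambda)$ is an M-solution of the coupled FBSVIE (32), the optimality relation (25) holds with $\widehat u(t)=-R^{-1}(t)\lambda(t)$ (here $\Xi_2\equiv0$ since $B_2=C_2=0$), hence by Theorem~4.2 one has $\Theta_1\varphi\in\mathcal R(\Theta)$; together with the standing conditions (20)--(21), that is, $\Theta_{11}\geq0$ and $\Theta_{22}\leq0$, Theorem~3.1 yields an open-loop saddle point of the game admitting the representation $\widehat u(t)=-R^{-1}(t)\lambda(t)$.

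I expect the only genuinely delicate point to be the reduction from (38) to (39): one must check that $\Phi$ is bounded on $[0,T]^2$ (which follows from a Gronwall-type estimate on the linear integro-differential equation defining $\Phi$) and that the noise part of $\varphi$ indeed has the deterministic-kernel structure required in Theorem~5.3. Once this identification is in place, the remainder is a direct invocation of the results already proved, with no new estimates needed.
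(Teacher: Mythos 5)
Your proposal follows essentially the same route as the paper, which states this theorem as an immediate consequence of the lifting of the delay equation (38) to the Volterra form (39) (so that $A_i\equiv 0$, $U_1=(K_1,K_2)$, $U_2=0$, $\varphi_1=X_0$, $l(t,s)=\Phi(t,s)D'(s)$) combined with Theorem 5.3 in the $U_2=0$ case where the BSFVIE (36) reduces to the SFVIE (37). Your additional verifications (boundedness of $\Phi$, checking (H1), and tracing the chain through Theorems 4.2 and 3.1) only make explicit what the paper leaves implicit, so the argument is correct.
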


Furthermore, by assuming $S(t)=0,$ $R_{12}=R_{21}=0,$ we can obtain
one express for the saddle point by
\begin{eqnarray*}
u_1(t)=-R^{-1}_{11}(t)\left[K_1(T,t)E^{\mathcal{F}_{t}}GX(T)+E^{\mathcal{F}_{t}}\int_t^TK_1(s,t)Q(s)X(s)ds\right],
\quad t\in[0,T],\\
u_2(t)=-R^{-1}_{22}(t)\left[K_2(T,t)E^{\mathcal{F}_{t}}GX(T)+E^{\mathcal{F}_{t}}\int_t^TK_2(s,t)Q(s)X(s)ds\right],
\quad t\in[0,T].
\end{eqnarray*}

\end{document}